\newcommand{\Var}[1]{\mathrm{Var}\left(#1\right)}
\newcommand{\mcal}[1]{\mathcal{#1}}
\newcommand{\Ind}[1]{I\left\{ #1 \right\}}
\newtheorem{corollary}{Corollary}[section]
\newtheorem{definition}{Definition}[section]
\newtheorem{theorem}{Theorem}[section]
\newtheorem{lemma}{Lemma}[section]
\theoremstyle{remark}
\newtheorem{remark}{Remark}[section]
\title{Reinforced urns and the subdistribution beta-Stacy process prior for competing risks analysis}
\date{Accepted manuscript, Scandinavian Journal of Statistics, 2018}
\author{Andrea Arfé\footnote{Department of Decision Sciences, Bocconi University, Milan, Italy. E-mail: andrea.arfe@phd.unibocconi.it} \footnote{Department of Biostatistics and Computational Biology, Dana‐Farber Cancer Institute, Boston, Massachusetts. E-mail: aarfe@jimmy.harvard.edu} \ Stefano Peluso\footnote{Department of Statistical Sciences, Università Cattolica del Sacro Cuore, Milan, Italy.} \ Pietro Muliere\footnote{Department of Decision Sciences, Bocconi University, Milan, Italy.}}
\begin{document}
\maketitle

\begin{abstract}
In this paper we introduce the subdistribution beta-Stacy process, a novel Bayesian nonparametric process prior for subdistribution functions useful for the analysis of competing risks data. In particular, we i) characterize this process from a predictive perspective by means of an urn model with reinforcement, ii) show that it is conjugate with respect to right-censored data, and iii) highlight its relations with other prior processes for competing risks data. Additionally, we consider the subdistribution beta-Stacy process prior in a nonparametric regression model for competing risks data which, contrary to most others available in the literature, is not based on the proportional hazards assumption. 

\textbf{Keywords:} Bayesian nonparametrics; competing risks; prediction; reinforcement; subdistribution beta-Stacy; urn process.
\end{abstract}

\section{Introduction}

In the setting of clinical prognostic research with a time-to-event outcome, the occurrence of one of several competing risks may often preclude the occurrence of another event of interest \citep[Chapter 8]{Kalbfleisch2002}. In such cases it is typically of interest to assess i) the probability that one of the considered competing risks occurs within some time interval and ii) how this probability changes in association with predictors of interest  \citep{Wolbers2009,Fine1999,Putter2007}. For example, in a study of melanoma patients who received radical surgery such as that of \citet{Drzewiecki1980}, interest may be on the risk of melanoma-related mortality or melanoma-unrelated mortality and their potential predictors. Here, melanoma-related and melanoma-unrelated death act as competing risks, since onset of one necessarily precludes the onset of the other \citep[Chapter 1]{Andersen2012}.

Competing risks data has received widespread attention in the frequentist literature. It suffices to recall the comprehensive textbooks of \citet{Kalbfleisch2002}, \citet{Pintilie2006}, \citet{Aalen2008}, \citet{Lawless2011}, \citet{Andersen2012} and \citet{Crowder2012}. \citet{Putter2007}, \citet{Wolbers2009}, and \citet{Andersen2002} provide an introductory overview of standard approaches for competing risks data. Classical approaches to prediction in presence of competing risks focus on the \emph{subdistribution function}, also known as the \emph{cumulative incidence function}, which represents the probability that a specific event occurs within a given time period. \citet[Chapter 8]{Kalbfleisch2002} describe a frequentist nonparametric estimator for the subdistribution function, while \citeauthor{Fine1999a}, in their pivotal \citeyear{Fine1999a} paper, introduced a semiparametric proportional hazards model for the subdistribution function. \citet{Fine1999a} and \citet{Scheike2008} considered alternative semiparametric estimators, whilst \citet{Larson1985}, \citet{Jeong2007}, and \citet{Hinchliffe2013} considered parametric regression models for the subdistribution function. 

In contrast with the frequentist literature, the Bayesian literature on competing risks is still sparse, although several relevant contributions can be identified. \citet{Ge2012} introduced a semiparametric model for competing risks by separately modelling the subdistribution function of the primary event of interest and the conditional time-to-event distributions of the other competing risks. They modelled the baseline subdistribution hazards and the cause-specific hazards by means of a gamma process prior (see \citealp{Nieto-Barajas2002} and \citealp[Section 11.8]{Kalbfleisch2002}). \citet{DeBlasi2007} suggested a semiparametric proportional hazards regression model with logistic relative risk function for cause-specific hazards. For inference, they assign the common baseline cumulative hazard a beta process prior \citep{Hjort1990}. With the same approach, Hjort's extension of the beta process for nonhomogeneous Markov Chains \citep[Section 5]{Hjort1990} may be considered as a prior distribution on the set of cause-specific baseline hazards in a more general multiplicative hazards model (see \citealp[Chapter III]{Andersen2012} and \citealp[Chapter 9]{Lawless2011}). In the beta process for nonhomogeneous Markov Chains the individual transition hazards are necessarily independent \citep[Section 5]{Hjort1990}. The beta-Dirichlet process, a generalization of the beta process introduced by \citet{Kim2012}, relaxes this assumption by allowing for correlated hazards. \citet{Kim2012} use the beta-Dirichlet process to define a semiparametric semi-proportional transition hazards regression model for nonhomogeneous Markov Chains which, in the competing risks setting, could be used to model the cause-specific hazards. With the same purpose, \citet{Chae2013} proposed a nonparametric regression model based on a mixture of beta-Dirichlet process priors.

In this paper we introduce a novel stochastic process, a generalization of Walker and Muliere's beta-Stacy process \citep{Walker1997}, which represents a nonparametric prior distribution (i.e. a probability distribution on an infinite-dimensional space of distribution functions; see \citealp{Ferguson1973, Hjort2010, Mueller2013}) useful for the Bayesian analysis of competing risks data. This new process, which we call the \emph{subdistribution beta-Stacy} process, is conjugate with respect to right-censored observations, greatly simplifying the task of performing probabilistic predictions. We will also use the subdistribution beta-Stacy process to specify a Bayesian competing risks regression model useful for making prognostic predictions for individual patients. Contrary to most available regression approaches for competing risks, ours is not based on the proportional hazards assumption. As an illustration, we implement our model to analyse a classical dataset relating to survival of patients after surgery for malignant melanoma \citep[Chapter 1]{Andersen2012}. Throughout the paper, our perspective is Bayesian nonparametric because: i) the Bayesian interpretation of probability is especially suited for representing uncertainty when making predictions \citep{deFinetti1937, Singpurwalla1988}; ii) Bayesian nonparametric models typically provide a more honest assessment of posterior uncertainty than parametric models, as the formers are less tied to potentially restrictive and/or arbitrary parametric assumptions which may give a false sense of posterior certainty \citep{Mueller2013, Hjort2010, Phadia2015, Ghosal2017}. 

To characterize the subdistribution beta-Stacy process we adhere to the \emph{predictive approach}, a framework championed by \citet{deFinetti1937} which is receiving renewed attention in statistics and machine learning as a useful tool for constructing Bayesian nonparametric priors \citep{Fortini2012, Orbanz2015}. In the predictive approach, both the model and the prior are implicitly characterized by first specifying the predictive distribution of the observable quantities and then by appealing to results related to the celebrated de Finetti Representation Theorem \citep{Walker1999, Muliere2000, Epifani2002, Muliere2003, Bulla2007, Fortini2012}. In our context, the predictive distribution represents a specific rule prescribing how probabilistic predictions for a new patient should be performed after observing the experience of other similar (exchangeable) patients. This makes the predictive approach especially suited for our purposes: as our focus is on making prognostic predictions for individual patients, it seems natural to focus directly on the predictive distribution and its properties. Additionally, the predictive approach avoids some conceptual difficulties arising when specifying prior distributions for unobservable quantities (such as cause-specific hazards or other finite- or infinite-dimensional parameters). In fact, as often underlined by de Finetti and others, one can only express a subjective probability on observable facts; the role of unobservable quantities is just to provide a link between past experience and the probability of future observable facts (\citealp{deFinetti1937, Singpurwalla1988, Wechsler1993, Cifarelli1996}; \citealp[Chapter 4]{Bernardo2000}; \citealp{Fortini2012}).   

The predictive rule underlying the subdistribution beta-Stacy process will be described in terms of the laws determining the evolution of a \emph{reinforced urn process} \citep{Muliere2000}. Urn models have been used to characterize many common nonparametric prior processes. Classic examples include the use of a P\'olya urn for generating a Dirichlet process \citep{Blackwell1973}, P\'olya trees \citep{Mauldin1992}, and a generalised P\'olya-urn scheme for sampling the beta-Stacy process \citep{Walker1997}; \citet{Fortini2012} provide references to other modern examples. From this perspective, reinforced urn processes provide a general framework for building such urn-based characterizations. In fact, \citet{Muliere2000} and \citet{Muliere2000a} showed how reinforced urn processes can be used to characterize P\'olya trees, the beta-Stacy process, and even general neutral-to-the-right processes \citep{Doksum1974}. Reinforced urn processes have also been applied for Bayesian nonparametric inference in many contexts, from survival analysis \citep{Bulla2009} to credit risk \citep{Peluso2015}, thanks to their flexibility in modelling systems evolving through a sequence of discrete states. 

The main idea behind reinforced urn processes is that of reinforced random walk, introduced by \citet{Coppersmith1986} for modeling situations where a random walker has
a tendency to revisit familiar territory; see also \citet{Diaconis1988} and \citet{Pemantle1988,Pemantle2007}.  In detail, a reinforced urn process is a stochastic process with countable state-space $S$. Each point $x\in S$ is associated with an urn containing coloured balls. The possible colors of the ball are represented by the elements of the finite set $E$. Each urn $x\in S$ initially contains $n_x(c)\geq 0$ balls of color $c\in E$. The quantities $n_x(c)$ need not be integers, although thinking them as such simplifies the description of the process. For a fixed initial state $x_0$, recursively define the process as follows: i) if the current state is $x\in S$, then a ball is sampled from the corresponding urn and replaced together with a fixed amount $m>0$ of additional balls of the same color; hence, the extracted color is ``reinforced'', i.e. made more likely to be extracted in future draws from the same urn \citep{Coppersmith1986, Pemantle1988, Pemantle2007}; ii) if $c\in E$ is the color of the sampled ball, then the next state of the process is $q(x,c)$, where $q:S\times E\rightarrow S$ is a known function, called the \emph{law of motion} of the process, such that for every $x,y\in S$ there exists a unique $c(x,y)\in E$ satisfying $q(x,c(x,y))=y$. For our purposes, the sequence of colors extracted from the urns will represent the history of a series of sequentially observed patients. The ``reinforcement'' of colors will then correspond to the notion of ``learning from the past'' that allows predictions to be performed and which is central in the Bayesian paradigm \citep{Muliere2000,Muliere2003,Bulla2007,Peluso2015}. 

Before continuing, we must remark on the choice between continuous versus discrete time scales in the modelling of time-to-event distributions. In many, if not all, real applications, event times are not observed or available on a continuous time scale. Rather, they are either i) intrinsically discrete or ii) they are discrete because they arise from the coarsening of continuous data due to imprecise measurements (\citealp[Chapter 2]{Kalbfleisch2002}; \citealp[Chapter 1]{Tutz2016}; \citealp{Allison1982}; \citealp{Guo1994}). For this reason, throughout the paper we assume that the time axis has been pre-emptively discretized according to the fixed partition $(0,\tau_1]$, $(\tau_1,\tau_2]$, $\ldots$, $(\tau_{t-1},\tau_t]$, $\ldots$ (representing, say, successive days, months, years, etc.) implied by the measurement scale of event times in the considered application. Specifically, we assume that events can only occur at the times $\tau_1<\tau_2<\ldots$, in case (i), or that it is only possible to known in which intervals among $(0,\tau_1]$, $(\tau_1,\tau_2]$, $\ldots$, $(\tau_{t-1},\tau_t]$, $\ldots$ they occur, in case (ii). For notational simplicity, and without loss of generality, we also assume that any time-to-event variable $T>0$ takes values in the set of positive integers $t\geq 1$: the observation that $T=t$ represents either the fact that the event occurred at time $\tau_t$, in case (i), or during $(\tau_{t-1},\tau_t]$, in case (ii). 

\section{The subdistribution beta-Stacy process}\label{sec:def}

Suppose that the positive discrete random variable $T\in\{1,2,\ldots\}$ represents the time until an at-risk individual experiences some event of interest (e.g. time from surgery for melanoma to death). If the distribution of $T$ is unknown, then, in the Bayesian framework, it may be assigned a nonparametric prior to perform inference. In other words, it may be assumed that, conditionally on some random distribution function $G$ defined on $\{0,1,2,\ldots\}$, $T$ is distributed according to $G$ itself: $P(T\leq t\mid G)=G(t)$ for all $t\geq 0$, or also $P(T=t\mid G)=\Delta G(t)$, where $\Delta G(0)=G(0)=0$ and $\Delta G(t)=G(t)-G(t-1)$ for all integers $t\geq 1$. Thus the random distribution function $G$ assumes the role of an infinite-dimensional parameter, while its distribution corresponds to the nonparametric prior distribution. The \emph{beta-Stacy process} of Walker and Muliere (\citeyear{Walker1997}) is one of such nonparametric priors which has received frequent use. Specifically, a random distribution function $G$ on $\{0,1,2,\ldots\}$ is a discrete-time beta-Stacy process with parameters $\{(\beta_t,\gamma_t):t\geq 1\}$, where 
\begin{equation}\label{eqn:reccondBS}
\lim_{t\rightarrow+\infty}\prod_{u=1}^t \frac{\gamma_u}{\beta_u+\gamma_u}=0,
\end{equation}
if: i) $G(0)=0$ with probability 1 and ii) $\Delta G(t)=U_t\prod_{u=1}^{t-1}(1-U_u)$ for all $t\geq 1$, where $\{U_t:t\geq 1\}$ is a sequence of independent random variables such that $U_t\sim \textrm{Beta}(\beta_t,\gamma_t)$ for all integers $t\geq 1$. Condition (\ref{eqn:reccondBS}) is both necessary and sufficient for a random function $G(t)$ satisfying points i) and ii) to be a cumulative distribution function with probability one. The beta-Stacy process prior is conjugate with respect to right-censored data, a property that makes it especially suitable in survival analysis applications. Moreover, if $G$ is a discrete-time beta-Stacy process with parameters $\{(\beta_t,\gamma_t):t\geq 1\}$, then the predictive distribution $G^*$ of a new, yet unseen observation from $G$ is determined by $\Delta G^*(t)=E[\Delta G(t)]=\frac{\beta_t}{\beta_t+\gamma_t}\prod_{u=1}^{t-1} \frac{\gamma_u}{\beta_u+\gamma_u}$, the probability that a new observation from $G$ will be equal to $t$.

To generalize this approach to competing risks, we introduce the following definitions:

\begin{definition}\label{def:subdistf}
A function $F:\{0,1,2,\ldots\}\times\{1,\ldots,k\}\rightarrow [0,1]$, $k\geq 1$, is called a (discrete-time) \emph{subdistribution function} if it is the joint distribution function of some random vector $(T,\delta)\in\{0,1,2,\ldots\}\times\{1,\ldots,k\}$: $F(t,c)=P(T\leq t, \delta=c)$ for all $t\geq 0$ and $c\in\{1,\ldots,k\}$. A \emph{random subdistribution function} is defined as a stochastic process indexed by $\{0,1,2,\ldots\}\times\{1,\ldots,k\}$ whose sample paths form a subdistribution function almost surely. 
\end{definition}

Suppose now that $T$ represents the time until one of $k$ specific competing events occurs and that $\delta=1,\ldots,k$ indicates the type of the occurring event. For instance, for $k=2$, $T$ may represent time from surgery for melanoma to death, while $\delta$ may represent the specific cause of death: $\delta=1$ for melanoma-related mortality, $\delta=2$ for death due to other causes. As before, if the distribution of $(T,\delta)$ is unknown, then in the Bayesian nonparametric framework it is assumed that, conditionally on some random subdistribution function $F$, $(T,\delta)$ is distributed according to $F$ itself: $P(T\leq t,\delta=c\mid F)=F(t,c)$ for all $t\geq 0$ and $c=1,\ldots,k$. 

\begin{remark}
Conditionally on $F$, $\Delta F(t,c)=F(t,c)-F(t-1,c)$ is the probability of experiencing an event of type $c$ at time $t$: $\Delta F(t,c)=P(T=t,\delta=c\mid F)$. Additionally, if $G(t)=\sum_{d=1}^k F(t,d)$, $\Delta G(t)=G(t)-G(t-1)$, and $V_{t,d}=\Delta F(t,d)/\Delta G(t)$, then: $G(t)=P(T\leq t\mid F)$ is the cumulative probability of experiencing an event by time $t$, $\Delta G(t)=P(T=t\mid F)$ is the probability of experiencing an event at time $t$, and $V_{t,c}=P(\delta=c\mid T=t,F)$ is the probability of experiencing an event of type $c$ at time $t$ given that some event occurs at time $t$. Moreover, it can be shown that $F(t,c)=\sum_{u=1}^t S(u-1)\Delta A_c(u)$, where $S(t)=1-G(t)$ and $A_c(t)=\Delta F(t,c)/S(t-1)$ is the \emph{cumulative hazard} of experiencing an event of type $c$ by time $t$ \cite[Chapter 8]{Kalbfleisch2002}.
\end{remark}

To specify a suitable prior on the random subdistribution function $F$, we now introduce the subdistribution beta-Stacy process:
\begin{definition}\label{defin:subbetastacy}
Let $\{(\alpha_{t,0},\ldots,\alpha_{t,k}):t\geq 1\}$ be a collection of ($k+1$)-dimensional vectors of positive real numbers satisfying the following condition:
\begin{equation}\label{eqn:reccondSBS}
\lim_{t \rightarrow +\infty} \prod_{u=1}^t \frac{\alpha_{u,0}}{\sum_{d=0}^k \alpha_{u,d}}=0.
\end{equation}
A random subdistribution function $F$ is said to be a discrete-time \emph{subdistribution beta-Stacy} process with parameters $\{(\alpha_{t,0},\ldots,\alpha_{t,k}):t\geq 1\}$ if: 
\begin{enumerate}
\item $F(0,c)=0$ with probability 1 for all $c=1,\ldots,k$;
\item for all $c=1,\ldots,k$ and all $t\geq 1$, 
\begin{equation}\label{eqn:defsbs}
\Delta F(t,c)=W_{t,c}\prod_{u=1}^{t-1}\left(1-\sum_{d=1}^k W_{u,d}\right),
\end{equation}
with the convention that empty products are equal to $1$, and where $\{W_t=(W_{t,0},\ldots,W_{t,k}): t\geq 1\}$ is a sequence of independent random vectors such that for all $t\geq 1$, $$W_t\sim \textrm{Dirichlet}_{k+1}(\alpha_{t,0},\ldots,\alpha_{t,k}).$$
\end{enumerate}  
If in particular the $\alpha_{t,d}$ are determined as
\begin{equation*}
\alpha_{t,c} = \omega_t \Delta F_0(t,c)\quad \textrm{and} \quad \alpha_{t,0} = \omega_t \left(1-\sum_{d=1}^k F_0(t,d)\right)
\end{equation*}
for some fixed subdistribution function $F_0$ and sequence of positive real numbers $(\omega_t:t\geq 1)$, then we write $F\sim \textrm{SBS}(\omega,F_0)$.
\end{definition}

\begin{remark} \label{note:recconddef} In Section \ref{sec:predconstr}, Remark \ref{note:proofrecconddef}, it will be shown that condition (\ref{eqn:reccondSBS}) is both necessary and sufficient for a random function $F(t,c)$ satisfying points 1 and 2 of Definition \ref{defin:subbetastacy} to be a subdistribution function with probability 1. This justifies the consideration of the subdistribution beta-Stacy process as a prior distribution on the space of subdistribution functions. Also note that if $F\sim \textrm{SBS}(\omega,F_0)$, then condition (\ref{eqn:reccondSBS}) is automatically satisfied since $\sum_{d=0}^k \alpha_{t,d}= \omega_t (1-\sum_{d=1}^k F_0(t-1,d))$ and so $\prod_{t=1}^{+\infty} [\alpha_{t,0}/\sum_{d=0}^k \alpha_{t,d}]= \lim_{t\rightarrow +\infty} (1-\sum_{d=1}^k F_0(t,d))=0$, as $\lim_{t\rightarrow +\infty} \sum_{d=1}^k F_0(t,d)=1$ (provided occurrence of at least one of the $k$ events is inevitable).
\end{remark}

The following lemma (which can be proven by taking expectations of Equation (\ref{eqn:defsbs}) and using the fact that the $W_t$ are independent Dirichlet random vectors) characterizes the moments of the subdistribution beta-Stacy process.

\begin{lemma}\label{lemma:moments}
Let $F$ be a subdistribution beta-Stacy process with parameters \\ $\{(\alpha_{t,0},\ldots,\alpha_{t,k}) : t\geq 1\}$. Then
\begin{align}
E[\Delta F(t,c)]&=\frac{\alpha_{t,c}}{\sum_{d=0}^k \alpha_{t,d}}\prod_{u=1}^{t-1}\frac{\alpha_{u,0}}{\sum_{d=0}^k \alpha_{u,d}}, \label{eqn:mean}\\
E[\Delta F(t,c)^2] &= \frac{\alpha_{t,c}(1+\alpha_{t,c})}{(\sum_{d=0}^k \alpha_{t,d})(1+\sum_{d=0}^k \alpha_{t,d})}\prod_{u=1}^{t-1}\frac{\alpha_{u,0}(1+\alpha_{u,0})}{(\sum_{d=0}^k \alpha_{u,d})(1+\sum_{d=0}^k \alpha_{u,d})} \label{eqn:meansquare}\\
\Var{\Delta F(t,c)} &= E[\Delta F(t,c)]\left(\frac{1+\alpha_{t,c}}{1+\sum_{d=0}^k \alpha_{t,d}}\prod_{u=1}^{t-1}\frac{1+\alpha_{u,0}}{1+\sum_{d=0}^k \alpha_{u,d}}-E[\Delta F(t,c)]\right)\label{eqn:var}
\end{align}
for all $t\geq 1$ and $c=1,\ldots,k$.
\end{lemma}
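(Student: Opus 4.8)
The plan is to exploit the single structural fact that the components of each Dirichlet vector sum to one, which collapses the product in Equation (\ref{eqn:defsbs}) into a clean product of independent marginals. Since $\sum_{d=0}^k W_{u,d}=1$ almost surely, we have $1-\sum_{d=1}^k W_{u,d}=W_{u,0}$, so Equation (\ref{eqn:defsbs}) simplifies to
\begin{equation*}
\Delta F(t,c)=W_{t,c}\prod_{u=1}^{t-1}W_{u,0}.
\end{equation*}
Writing $S_t=\sum_{d=0}^k\alpha_{t,d}$ for brevity, I would recall that each marginal of a Dirichlet vector is a Beta distribution, namely $W_{t,j}\sim\textrm{Beta}(\alpha_{t,j},S_t-\alpha_{t,j})$, whence $E[W_{t,j}]=\alpha_{t,j}/S_t$ and $E[W_{t,j}^2]=\alpha_{t,j}(1+\alpha_{t,j})/[S_t(1+S_t)]$.

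First I would establish Equations (\ref{eqn:mean}) and (\ref{eqn:meansquare}). Because the vectors $\{W_t:t\geq1\}$ are independent, every power of $\Delta F(t,c)$ factorizes as $E[\Delta F(t,c)^r]=E[W_{t,c}^r]\prod_{u=1}^{t-1}E[W_{u,0}^r]$. Taking $r=1$ and substituting the Beta first moments yields (\ref{eqn:mean}) directly, while taking $r=2$ and substituting the Beta second moments yields (\ref{eqn:meansquare}). These two steps are entirely mechanical.

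The only step requiring a small observation is the variance formula (\ref{eqn:var}), which I would obtain from $\Var{\Delta F(t,c)}=E[\Delta F(t,c)^2]-E[\Delta F(t,c)]^2$. The key is to split each factor appearing in (\ref{eqn:meansquare}) according to $\alpha(1+\alpha)/[S(1+S)]=(\alpha/S)\cdot(1+\alpha)/(1+S)$ and then to regroup the factors $\alpha/S$ from every term into the product that constitutes $E[\Delta F(t,c)]$ in (\ref{eqn:mean}). This rewrites $E[\Delta F(t,c)^2]$ as $E[\Delta F(t,c)]$ multiplied by the product $\frac{1+\alpha_{t,c}}{1+S_t}\prod_{u=1}^{t-1}\frac{1+\alpha_{u,0}}{1+S_u}$, so factoring the common $E[\Delta F(t,c)]$ out of the difference gives exactly (\ref{eqn:var}). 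I expect no genuine obstacle: the whole argument rests on the identity $1-\sum_{d=1}^k W_{u,d}=W_{u,0}$ together with independence, and the only point demanding care is the bookkeeping in the regrouping of the product, which becomes transparent once the $\alpha(1+\alpha)/[S(1+S)]$ split is performed.
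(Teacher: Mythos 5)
Your proof is correct and follows exactly the route the paper indicates (the paper only sketches it as ``taking expectations of Equation (\ref{eqn:defsbs}) and using the fact that the $W_t$ are independent Dirichlet random vectors''): the identity $1-\sum_{d=1}^k W_{u,d}=W_{u,0}$, independence across $t$, and the Beta marginal moments give (\ref{eqn:mean})--(\ref{eqn:meansquare}), and your regrouping for (\ref{eqn:var}) is the standard computation. Nothing is missing.
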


\begin{remark}
Using Theorem 2.5 of \citet{Ng2011} it is possible to show that the vector of random probabilities 
$$\left(1-\sum_{d=1}^k \Delta F(t,d), \Delta F(t,1),\ldots, \Delta F(t,k)\right)$$
is \emph{completely neutral} in the sense of \citet{Connor1969}. Consequently, Equations (\ref{eqn:mean}) and (\ref{eqn:meansquare}) also follow from formulas (4) and (9) of \citet{Connor1969}.
\end{remark}

\begin{remark}
Note that the previous Lemma \ref{lemma:moments} also characterizes the predictive distribution associated to a subdistribution beta-Stacy process: if $F$ is a subdistribution beta-Stacy process with parameters $\{(\alpha_{t,0},\ldots,\alpha_{t,k}):t\geq 1\}$, then the predictive subdistribution function $F^*$ of a new, yet unseen observation from $F$ is determined by $\Delta F^*(t,d)=E[\Delta F(t,d)]$, which is given by Equation (\ref{eqn:mean}) ($\Delta F^*(t,d)$ is the probability that a new observation from $F$ will be equal to $(t,d)$).
\end{remark}

\begin{remark}\label{note:priorspecification}
Let $F\sim \textrm{SBS}(\omega,F_0)$. It can be shown from Equation (\ref{eqn:mean}) that $E[\Delta F(t,c)]=\Delta F_0(t,c)$ for all $t\geq 1$ and $c=1,\ldots,k$, implying both that i) $F$ is centered on $F_0$ and ii) $F_0$ is equal to the predictive distribution associated to $F$. From Equation (\ref{eqn:var}) it can be further shown that $\Var{\Delta F(t,c)}$ is a decreasing function of $\omega_t$, with $\Var{\Delta F(t,c)}\rightarrow 0$ as $\omega_t\rightarrow +\infty$ and $\Var{\Delta F(t,c)}\rightarrow F_0(t,c)(1-F_0(t,c))$ as $\omega_t\rightarrow 0$. Thus $\omega_t$ can be used to control the prior precision of the $ \textrm{SBS}(\omega,F_0)$ process.
\end{remark}

\section{Predictive characterisation of the subdistribution beta-Stacy process}\label{sec:predconstr}

\citet{Muliere2000} described a predictive construction of the discrete-time beta-Stacy process by means of a reinforced urn process $\{Y_n: n\geq 0\}$ with state space $\{0,1,2,\ldots\}$. The urns of this process contain balls of only two colors, black and white (say), and reinforcement is performed by the addition of a single ball ($m=1$). To intuitively describe this process, suppose that each patient in a series is observed from an initial time point until the onset of an event of interest. The process $\{Y_n: n\geq 0\}$ starts from $Y_0=0$, signifying the start of the observation for the first patient, and then evolves as follows: if $Y_n=t$ and a black ball is extracted, then the current patient does not experience the event at time $t$ and $Y_{n+1}=t+1$; if instead a white ball is extracted, then the current patient experiences the event at time $t$ and $Y_{n+1}=0$, so the process is restarted to signify the start of the observation of a new patient. With this interpretation, the number $T_n$ of states visited by $\{Y_n: n\geq 0\}$ between the $(n-1)$-th and $n$-th visits to the initial state 0 correspond to the time of event onset for the $n$-th patient. If the process $\{Y_n:n\geq 0\}$ is recurrent (so the times $T_n$ are almost surely finite), a representation theorem for reinforced urn processes implies that the process $\{Y_n:n\geq 0\}$ is a mixture of Markov Chains. The corresponding mixing measure is such that the rows of the transition matrix are independent Dirichlet processes (\citealp[Theorem 2.16]{Muliere2000}; see \citealp{Ferguson1973} for the definition of a Dirichlet process). Using this representation, \citet{Muliere2000} showed that the sequence $\{T_n:n\geq 1\}$ is exchangeable and that there exists a random distribution function $G$ such that i) conditionally on $G$, the times $T_1, T_2, \ldots$ are i.i.d. with common distribution function $G$, and ii) $G$ is a beta-Stacy process \citep[Section 3]{Muliere2000}.

In this section, we will generalize the predictive construction of \citet{Muliere2000} to yield a similar characterization of the subdistribution  beta-Stacy process. To do so, consider a reinforced urn process $\{X_n:n\geq 0\}$ with state space $S=\{0,1,2,\ldots\}\times E$, set of colors $E=\{0,1,\ldots,k\}$ ($k\geq 1$), starting point $X_0=(0,0)$, and law of motion defined by $q((t,0),c)=(t+1,c)$ and $q((t,d),c)=(0,0)$ for all for all integers $t\geq 0$ and $c,d=0,1,\ldots,k$, $d\neq 0$. Further suppose, for simplicity of presentation, that reinforcement is performed by the addition of a single ball ($m=1$) as before (but see Remark \ref{remark:m} below for the case where $m\in(0,+\infty)$). The initial composition of the urns is given as follows: i) $n_{(t,0)}(c)=\alpha_{t+1,c}$ for all integers $t\geq 0$ and $c=0,1,\ldots,k$; ii) $n_{(t,d)}(0)=1$, $n_{(t,d)}(c)=0$ for all integers $t\geq 0$ and $c,d=1,\ldots,k$, $d\neq 0$. Now,  define $\tau_0=0$ and $\tau_{n+1}=\inf\{t>\tau_n: X_t=(0,0)\}$ for all integers $n\geq 0$. The process $\{X_n:n\geq 0\}$ is said to be \emph{recurrent} if $P(\cap_{n=1}^{+\infty}\{\tau_n<+\infty\})=1$. Additionally, let $T((t,c))=t$ and $D((t,c))=c$ for all $(t,c)\in S$. For all $n\geq 1$, set $T_n=T(X_{\tau_n-1})$, the length of the sequence of states between the $(n-1)$-th and the $n$-th visits to the initial state $(0,0)$, and $D_n=D(X_{\tau_n-1})$, the color of the last ball extracted before the $n$-th visit to $(0,0)$. 

The process $\{X_n:n\geq 0\}$ can be interpreted as follows: a patient initially at risk of experiencing any of $k$ possible outcomes is followed in time starting from time $t=0$; at each time point $t$, the color of the extracted ball represents the status of the patient at the next time point $t+1$; if a ball of color $0$ is extracted, the patient remains at risk at the next time point; if instead a ball of color $c\in\{1,\ldots,k\}$ is extracted, then the patient will experience an outcome of type $c$ at the next time point. The process returns to the initial state after such an occurrence to signify the arrival of a new patient. With this interpretation, the variable $T_n$ represents the time at which the $n$-th patient experiences one of the $k$ events under study, while $D_n$ encodes the type of the realized outcome. These concepts are illustrated in Figure \ref{fig:urns}. Moreover, note that, although slightly different, the reinforced urn process used to construct the beta-Stacy process by \citet{Muliere2000} is essentially equivalent to the process $\{X_n:n\geq 0\}$ in the particular case where $k=1$, with color 0 being black and color 1 being white in the above description.

\begin{figure}
\centering
\includegraphics[scale=0.53,trim={0 6cm 0 0}]{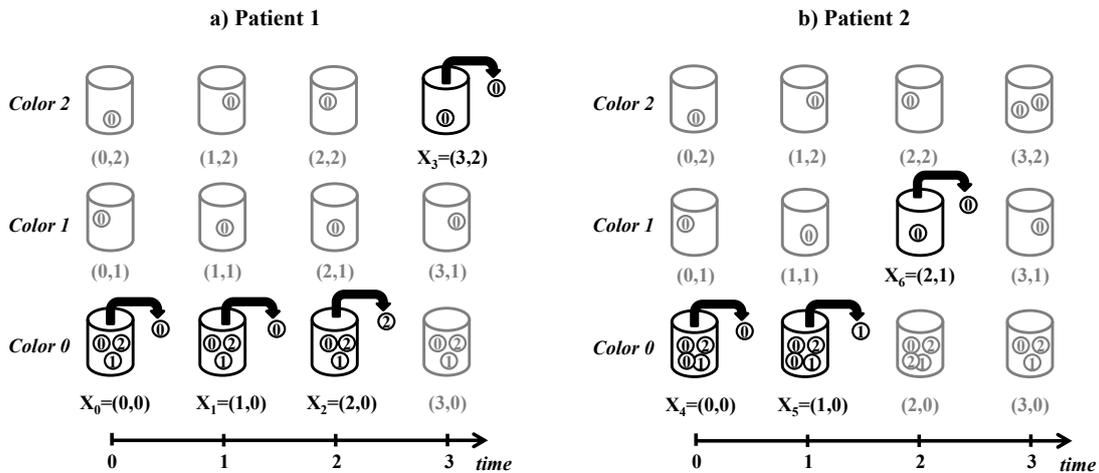}
\caption{Illustration of the reinforced urn process characterizing the subdistribution beta-Stacy process assuming $k=2$. In both panels, the horizontal axis measures the time since the last visit to the urn representing the state $(0,0)$. The process starts from the $(0,0)$ urn in Panel a, in which all urns are represented at their initial composition. In this example, balls of colors 0, 0, and 2 are successively extracted from the urns visited by the process, respectively at times 0, 1, and 2. At time 3 the process visits the $(3,2)$ urn, from which only balls of color 0 can be extracted. The process then returns to the $(0,0)$ urn and continues as shown in Panel b, where the composition of the urns has been updated by reinforcement. Suppose now that each visit to $(0,0)$ represents the arrival of a new melanoma patient at the moment of surgery. If color 1 represents death due to melanoma and color 2 represents death due to other causes, then the sequence of urns visited in Panel a corresponds to the history of an individual (Patient 1) who dies of causes not related to melanoma after 3 time instants since surgery ($T_1=3$, $D_1=2$), while Panel b represents the history of a subsequently observed individual (Patient 2) who dies due to melanoma after 2 time instants since surgery ($T_2=2$, $D_2=1$).}
\label{fig:urns}
\end{figure}

Continuing, in accordance with \citet{Diaconis1980} we say that the process $\{X_n:n\geq 0\}$ is \emph{Markov exchangeable} if $P(X_0=x_0,\ldots,X_n=x_n)=P(X_0=y_0,\ldots,X_n=y_n)$ for all finite sequences $(x_0,\ldots,x_n)$ and $(y_0,\ldots,y_n)$ of elements of $S$ such that i) $x_0=y_0$ and ii) for any $s_1,s_2\in S$, the number of transitions from $s_1$ to $s_2$  is the same in both sequences.

\begin{lemma}\label{lemma:partexch}
The process $\{X_n:n\geq 0\}$ is Markov exchangeable. Consequently, if $\{X_n:n\geq 0\}$ is recurrent, then it is also a mixture of Markov Chains with state space $S$. In other words, there exists a probability measure $\mu$ on the space $\mcal{M}$ of all transition matrices on $S\times S$ and a $\mcal{M}$-valued random element $\Pi\sim\mu$ such that for all $n\geq 1$ and all sequences $x_0,\ldots,x_n\in S$ with $x_0=(0,0)$, 
$$P(X_0=x_0,\ldots,X_n=x_n\mid \Pi)=\prod_{i=0}^{n-1} \Pi(x_i,x_{i+1}),$$
where $\Pi(x,y)$ is the element on the $x$-row and $y$-th column of $\Pi$. Additionally, for each $x=(t,c)\in S$, let $\mcal{N}_x(\cdot)$ be the measure on $S$ (together with the Borel $\sigma$-algebra generated by the discrete topology) which gives mass $n_{(t,c)}(d)$ to $q((t,c),d)$ for all $d=0,1,\ldots,k$, and null mass to all other points in $S$. Then, the random probability measure $\Pi(x,\cdot)$ on $S$ is a Dirichlet process with parameter measure $\mcal{N}_x(\cdot)$.
\end{lemma}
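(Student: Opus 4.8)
The plan is to prove the three assertions in sequence, exploiting the self-contained P\'olya structure of the individual urns. First, to establish Markov exchangeability, I would use the observation that, by the defining property of the law of motion, every transition $x\to y$ of the process corresponds to the extraction of the unique color $c(x,y)$ from the urn at $x$. Hence a trajectory $(x_0,\dots,x_n)$ with $x_0=(0,0)$ is in bijection with the collection of color sequences drawn from the visited urns, and the number of transitions from $s_1$ to $s_2$ equals the number of times color $c(s_1,s_2)$ is extracted from urn $s_1$. Because distinct urns are reinforced independently, the probability of the trajectory factorizes into a product over urns, each factor being the probability that a P\'olya urn (with initial composition $n_x(\cdot)$ and reinforcement $m=1$) produces the corresponding color sequence. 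Since the P\'olya-urn probability of a color sequence is invariant under permutations --- it depends only on the per-color counts --- each factor, and therefore the whole product, depends only on the transition counts. As the two trajectories in the definition share the same initial state and the same transition counts, they have equal probability, which is exactly Markov exchangeability.

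Second, the passage to a mixture of Markov chains is immediate: the process is Markov exchangeable by the first step and recurrent by hypothesis, so the representation theorem of \citet{Diaconis1980} applies directly, yielding the mixing measure $\mu$ on $\mcal{M}$ and the directing random matrix $\Pi\sim\mu$ with the stated factorization of conditional path probabilities.

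Third, to identify the rows, I would note that under the representation the rows $\Pi(x,\cdot)$ are mutually independent, inheriting the independence of the urns, and that for each fixed $x$ the law of $\Pi(x,\cdot)$ is the de Finetti (directing) measure of the exchangeable color sequence produced by urn $x$. By the classical P\'olya-urn/Dirichlet correspondence (\citealp{Blackwell1973}; \citealp{Ferguson1973}), the directing measure of a P\'olya sequence with finite initial composition $n_x(d)$, $d\in E$, is a finite-dimensional Dirichlet distribution with those same parameters; transported through $q$ to the successor states $q(x,d)$, this is precisely a Dirichlet process with the finitely supported parameter measure $\mcal{N}_x$ (degenerating to a point mass at $(0,0)$ for the absorbing urns $(t,d)$, $d\neq 0$). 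This matches the claim and parallels Theorem 2.16 of \citet{Muliere2000} for their beta-Stacy urn.

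I expect the main obstacle to lie in the rigorous identification of the third step rather than in the first two. One must carefully match the abstract rows supplied by the Diaconis--Freedman theorem with the concrete urn-wise directing measures, and in particular argue that the rows corresponding to states visited only finitely often are still Dirichlet, even though their empirical transition frequencies need not stabilize along the actual trajectory. The cleanest way around this is to build the mixing measure directly as the product, over all $x\in S$, of the independent Dirichlet directing measures of the (notionally infinite) P\'olya sequences attached to each urn, verify that this product measure reproduces the finite-dimensional path probabilities computed in the first step, and then invoke uniqueness of the de Finetti mixing measure to conclude that it coincides with $\mu$.
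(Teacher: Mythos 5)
Your proposal is correct and follows essentially the same route as the paper: the paper's proof simply cites Theorems 2.3 and 2.16 of \citet{Muliere2000} (Markov exchangeability of reinforced urn processes via the P\'olya-urn factorization, and the Dirichlet identification of the rows) together with Theorem 7 of \citet{Diaconis1980} (the mixture-of-Markov-chains representation for recurrent Markov exchangeable processes), which are exactly the three steps you reconstruct. Your closing discussion of how to rigorously match the abstract rows with the urn-wise directing measures is a sound elaboration of what those cited results already deliver.
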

\begin{proof}
The thesis follows immediately from Theorem 2.3 and 2.16 of \citet{Muliere2000} and Theorem 7 of \citet{Diaconis1980}.
\end{proof}

\begin{lemma}\label{lemma:recurrence}
The process $\{X_n:n\geq 0\}$ is recurrent if and only if $\{(\alpha_{t,0},\ldots,\alpha_{t,k}):t\geq 1\}$ satisfies condition (\ref{eqn:reccondSBS}).
\end{lemma}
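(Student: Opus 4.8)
The plan is to reduce recurrence to the behaviour of a single excursion away from $(0,0)$ and then to exploit the reinforced urn structure to compute the relevant escape probability explicitly. The key structural observation is that, because the law of motion on the ``alive'' branch is $q((t,0),c)=(t+1,c)$, during any single excursion (between two consecutive visits to $(0,0)$) the process visits each urn $(t,0)$ \emph{at most once}: it moves monotonically up the chain $(0,0),(1,0),(2,0),\ldots$ until it first draws a colour $c\neq 0$ and is sent back to $(0,0)$. Hence an excursion fails to return precisely when colour $0$ is drawn at every urn $(t,0)$, $t\geq 0$.

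First I would compute the escape probability of the \emph{first} excursion directly. On this excursion each urn $(t,0)$ is visited for the first time, hence sits at its initial composition $(\alpha_{t+1,0},\ldots,\alpha_{t+1,k})$, and the draws at distinct urns are independent; therefore
\[
P(\tau_1=+\infty)=\prod_{t=0}^{\infty}\frac{\alpha_{t+1,0}}{\sum_{d=0}^{k}\alpha_{t+1,d}}=\lim_{T\to\infty}\prod_{u=1}^{T}\frac{\alpha_{u,0}}{\sum_{d=0}^{k}\alpha_{u,d}}=:L .
\]
This already yields one implication: if condition (\ref{eqn:reccondSBS}) fails then $L>0$, so $P(\tau_1<+\infty)=1-L<1$ and the process cannot be recurrent. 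For the converse I would establish a mixture-of-Markov-chains representation \emph{without} presupposing recurrence. This is the main obstacle, since Lemma \ref{lemma:partexch} supplies such a representation only \emph{after} recurrence is known, so invoking it here would be circular. The single-visit-per-excursion property is exactly what lets me sidestep this: for each fixed $t$, the successive colours drawn from urn $(t,0)$ form a P\'olya sequence, so by de Finetti's theorem there is a random vector $P_t=(P_{t,0},\ldots,P_{t,k})\sim\textrm{Dirichlet}_{k+1}(\alpha_{t+1,0},\ldots,\alpha_{t+1,k})$ such that, conditionally on $P_t$, those draws are i.i.d. multinomial with parameter $P_t$; moreover the $P_t$ are independent across $t$, as distinct urns are reinforced separately. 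Realising each urn's draws through its own conditionally i.i.d. sequence couples $\{X_n\}$ with a process that, conditionally on $\{P_t:t\geq 0\}$, is the Markov chain moving from $(t,0)$ to $(t+1,c)$ with probability $P_{t,c}$ and deterministically from $(t,c)$, $c\neq 0$, to $(0,0)$. Under this conditioning the successive excursions are i.i.d., each failing to return with common escape probability $R:=\prod_{t=0}^{\infty}P_{t,0}$.

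It then remains to read off recurrence. Given $\{P_t\}$, if $R>0$ the number of returning excursions before the first escape is geometric with parameter $R$, hence a.s. finite, so some $\tau_n=+\infty$; if $R=0$ every excursion returns almost surely. Thus $P\big(\bigcap_{n\geq 1}\{\tau_n<+\infty\}\mid\{P_t\}\big)=\Ind{R=0}$, whence $P\big(\bigcap_{n}\{\tau_n<+\infty\}\big)=P(R=0)$. Since $0\leq R\leq 1$, the process is recurrent iff $R=0$ almost surely iff $E[R]=0$. Finally, the partial products $\prod_{t=0}^{T-1}P_{t,0}$ decrease to $R$ and are bounded, so by bounded convergence and independence
\[
E[R]=\lim_{T\to\infty}\prod_{t=0}^{T-1}E[P_{t,0}]=\lim_{T\to\infty}\prod_{u=1}^{T}\frac{\alpha_{u,0}}{\sum_{d=0}^{k}\alpha_{u,d}}=L ,
\]
using that $E[P_{t,0}]=\alpha_{t+1,0}/\sum_{d=0}^{k}\alpha_{t+1,d}$ is the Dirichlet mean. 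Hence the process is recurrent iff $L=0$, which is exactly condition (\ref{eqn:reccondSBS}).

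The delicate points, which I would treat carefully, are two. The first is the non-circular derivation of the conditional Markov structure: it rests entirely on the single-visit-per-excursion property, together with the fact that assigning each urn's draws to visits \emph{in process order} is an adapted (predictable) selection and so preserves the conditional i.i.d. property of the P\'olya sequence — this is the technical heart and must be argued rather than quoted from Lemma \ref{lemma:partexch}. The second is the interchange of expectation and infinite product, which is routine via monotone/bounded convergence as indicated. Everything else is bookkeeping.
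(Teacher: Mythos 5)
Your proof is correct, and it diverges from the paper's in an interesting way. The first half --- computing $P(\tau_1=+\infty)$ as the infinite product of the initial-composition probabilities of drawing colour $0$ at each urn $(t,0)$, and concluding that recurrence forces condition (\ref{eqn:reccondSBS}) --- is exactly the paper's argument. For the converse, however, the paper proceeds by induction on the excursion index: assuming $\tau_1,\ldots,\tau_n$ are a.s.\ finite, it bounds $P(\tau_{n+1}=+\infty\mid T_1,\ldots,T_n)$ from above by the probability of drawing colour $0$ from every urn $(i,0)$ with $i>\max(T_1,\ldots,T_n)+1$; these urns have never been visited, so they still carry their initial composition, and the corresponding tail product vanishes under (\ref{eqn:reccondSBS}). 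That route needs no representation theorem at all, only the crude observation that reinforcing finitely many urns cannot help an excursion escape through the infinitely many fresh ones. You instead build the per-urn P\'olya/de Finetti coupling up front, obtain a conditional Markov chain given independent Dirichlet vectors $P_t$ without presupposing recurrence, and reduce everything to the exact identity $P\left(\cap_{n}\{\tau_n<+\infty\}\right)=P(R=0)$ with $R=\prod_{t}P_{t,0}$ and $E[R]=L$. This is sharper --- it gives the escape probability exactly and settles both implications in one stroke --- and you correctly identify that the coupling must be argued directly rather than imported from Lemma \ref{lemma:partexch}, whose invocation would be circular; the price is that this construction essentially reproduces the representation machinery that the paper deliberately postpones until after recurrence is established. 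Both arguments are sound; the paper's is lighter, yours is more informative.
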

\begin{proof}
First observe that
\begin{align*}
P(\tau_1=+\infty) &= \lim_{n\rightarrow +\infty} P(\tau_1>n) \\
&=\lim_{n\rightarrow +\infty} P(X_0=(0,0),X_1=(1,0),\ldots,X_{n-1}=(n-1,0)) \\
&= \lim_{n\rightarrow +\infty} \prod_{t=0}^{n-1} \frac{n_{(t,0)}(0)}{\sum_{d=1}^k n_{(t,0)}(d)} \\
&=\lim_{n\rightarrow +\infty} \prod_{t=1}^n \frac{\alpha_{t,0}}{\sum_{d=1}^k \alpha_{t,d}}.
\end{align*}

Consequently, if $\{X_n:n\geq 0\}$ is recurrent, then $P(\tau_1=\infty)=0$ and so condition (\ref{eqn:reccondSBS}) must hold. Conversely, suppose that condition (\ref{eqn:reccondSBS}) is satisfied. Then $P(\tau_1<+\infty)=1$. By induction on $n\geq 1$, suppose that $P(\cap_{i=1}^n\{\tau_i<+\infty\})=1$. Then
$$P(\tau_{n+1}=+\infty)=\int_{\cap_{i=1}^n\{\tau_i<+\infty\}} P(\tau_{n+1}=+\infty\mid T_1,\ldots,T_n)dP.$$
Given $T_1,\ldots,T_n$, if $\tau_{n+1}=+\infty$ then the process must visit all states $(t,0)$ with $t\geq 0$ starting from time $\tau_n$. Since the states $(t,0)$ for $t>L:=\max(T_1,\ldots,T_n)+1$ correspond to previously unvisited urns, the probability of this event is bounded above by 
$$\lim_{n\rightarrow +\infty} \prod_{i=L}^n \frac{n_{(i,0)}(0)}{\sum_{d=1}^k n_{(i,0)}(d)}=\lim_{n\rightarrow +\infty} \prod_{i=L+1}^n \frac{\alpha_{i,0}}{\sum_{d=1}^k \alpha_{i,d}}.$$
Hence
\begin{align*}
P(\tau_{n+1}=+\infty)\leq \int_{\cap_{i=1}^n\{\tau_i<+\infty\}} \lim_{n\rightarrow +\infty} \prod_{i=L+1}^n \frac{\alpha_{i,0}}{\sum_{d=1}^k \alpha_{i,d}} dP = 0,
\end{align*}
where the last equality follows from condition (\ref{eqn:reccondSBS}). Consequently, $P(\cap_{i=1}^{n+1}\{\tau_i<+\infty\})=1$. This argument shows that $P(\cap_{i=1}^{+\infty}\{\tau_i<+\infty\})=1$ and so the process must be recurrent, as needed.
\end{proof}

\begin{theorem}\label{thm:reprthm}
Suppose that the process $\{X_n:n\geq 0\}$ is recurrent. Then there exists a random subdistribution function $F$, such that, given $F$, the $(T_n,D_n)$ are i.i.d. distributed according to $F$. Moreover, i) $F$ is determined as a function of the random transition matrix $\Pi$ from Lemma \ref{lemma:partexch}, and ii) $F$ is a subdistribution beta-Stacy process with parameters $\{(\alpha_{t,0},\ldots,\alpha_{t,k}):t\geq 1\}$.
\end{theorem}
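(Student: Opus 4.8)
The plan is to condition on the random transition matrix $\Pi$ supplied by Lemma \ref{lemma:partexch}, exploit the regenerative structure of the resulting Markov chain at the state $(0,0)$ to obtain conditionally i.i.d.\ excursions, and then identify the conditional law of a single excursion's length and terminal color with the defining construction (\ref{eqn:defsbs}) of the subdistribution beta-Stacy process.

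First I would record the structure of $\Pi$. Since the process is recurrent, Lemma \ref{lemma:partexch} gives that $\{X_n:n\geq 0\}$ is a mixture of Markov chains, each row $\Pi(x,\cdot)$ being an independent Dirichlet process with base measure $\mcal{N}_x$. For $x=(t-1,0)$ this base measure is supported on the finitely many points $(t,0),\ldots,(t,k)$ with masses $\alpha_{t,0},\ldots,\alpha_{t,k}$, so the row reduces to a finite Dirichlet distribution; setting $W_{t,c}=\Pi((t-1,0),(t,c))$ then yields independent vectors $W_t=(W_{t,0},\ldots,W_{t,k})\sim\textrm{Dirichlet}_{k+1}(\alpha_{t,0},\ldots,\alpha_{t,k})$ for $t\geq 1$. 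The rows out of the states $(t,d)$ with $d\neq 0$ are degenerate at $(0,0)$ by construction of the urns, so from any such state the chain returns deterministically to the origin.

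Next I would establish the conditional i.i.d.\ structure. Recurrence of $\{X_n:n\geq 0\}$ means $P(\cap_{n}\{\tau_n<+\infty\})=1$; writing this as an average over $\mu$ shows the conditioned chain returns to $(0,0)$ infinitely often for $\mu$-almost every $\Pi$. Hence, by the strong Markov property applied at the regeneration times $\tau_n$, the successive excursions between visits to $(0,0)$ are, conditionally on $\Pi$, i.i.d. Since $(T_n,D_n)$ is a deterministic functional of the $n$-th excursion, the pairs $(T_n,D_n)$ are conditionally i.i.d.\ given $\Pi$. I would then define $F(t,c)=P(T_1\leq t,D_1=c\mid \Pi)$, a $\sigma(\Pi)$-measurable random subdistribution function, which gives part (i); a short tower-property computation shows the $(T_n,D_n)$ remain conditionally i.i.d.\ given the coarser $\sigma(F)$, with common law $F$.

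It remains to compute $F$ and recognize it. The only excursion producing $(T_1,D_1)=(t,c)$ with $c\in\{1,\ldots,k\}$ is the path $(0,0)\to(1,0)\to\cdots\to(t-1,0)\to(t,c)\to(0,0)$, whose conditional probability factorizes as $\Delta F(t,c)=W_{t,c}\prod_{u=1}^{t-1}W_{u,0}$. Using $W_{u,0}=1-\sum_{d=1}^k W_{u,d}$ this is exactly (\ref{eqn:defsbs}), and together with the independence and Dirichlet law of the $W_t$ it identifies $F$ as a subdistribution beta-Stacy process with the stated parameters, giving (ii); recurrence guarantees the total mass is $1$ almost surely, so $F$ is almost surely a genuine subdistribution function. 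The main obstacle is the regeneration step: one must transfer the unconditional recurrence of the reinforced urn process to conditional recurrence given $\Pi$ and then justify that the excursions are conditionally independent, so that the exchangeable sequence $(T_n,D_n)$ admits the required de Finetti-type representation. Once this is in place, the remaining identification is the direct path-probability calculation above.
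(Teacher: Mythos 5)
Your proposal is correct and follows essentially the same route as the paper: define $F(t,c)=P(T_1\leq t,D_1=c\mid\Pi)$ via Lemma \ref{lemma:partexch}, compute $\Delta F(t,c)$ as the conditional probability of the unique path $(0,0)\to\cdots\to(t-1,0)\to(t,c)$, and identify the rows $\Pi((t-1,0),\cdot)$ as $\textrm{Dirichlet}_{k+1}(\alpha_{t,0},\ldots,\alpha_{t,k})$ vectors. The only cosmetic difference is that the paper establishes the conditional i.i.d.\ property by directly writing the joint law $P((T_1,D_1)=(t_1,d_1),\ldots,(T_n,D_n)=(t_n,d_n)\mid\Pi)$ as the product $\prod_i\Delta F(t_i,d_i)$ of path probabilities, which sidesteps the regeneration/strong-Markov step you flag as the main obstacle.
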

\begin{proof}
Let $\Pi$ be the random transition matrix on $S\times S$ provided by Lemma \ref{lemma:partexch} and define $F(t,c)=P(T_1\leq t, D_1=c\mid \Pi)$, which is clearly a random subdistribution function. Moreover, for all $c=1,\ldots,k$, 
$$F(0,c)=P(T_1=0,D_1=c\mid \Pi)\leq P(T(X_{\tau_1-1})=1\mid \Pi)= P(\tau_1=1\mid \Pi)=0.$$
Instead, for all $c=1,\ldots,k$ and all $t\geq 1$,
\begin{align*}
\Delta F(t,c)&=P(T_1= t, D_1=c\mid \Pi) \\
&= P(X_0=(0,0),\ldots,X_{t-1}=(t-1,0),X_{t}=(t,c)\mid \Pi) \\
&= \Pi((t-1,0),(t,c))\prod_{u=0}^{t-2}\Pi((u,0),(u+1,0)).
\end{align*}
Now, for all $t\geq 1$ and $d=0,1,\ldots,k$, 
$$\mcal{N}_{(t-1,0)}(\{(t,d)\})=\mcal{N}_{(t-1,0)}(\{q((t-1,0),d)\})=n_{(t-1,0)}(d)=\alpha_{t,d}.$$ 
Then, from Lemma \ref{lemma:partexch} again and from the properties of the Dirichlet process \citep{Ferguson1973}, for all $t\geq 1$,
$\big{(}\Pi((t-1,0),(t,0)),\ldots,\Pi((t-1,0),(t,k))\big{)} 
\sim \textrm{Dirichlet}_{k+1}\left(\alpha_{t,0},\ldots,\alpha_{t,k}\right)$.
Hence, Lemma \ref{lemma:recurrence} implies that $F$ is subdistribution beta-Stacy with parameters $\{(\alpha_{t,0},\ldots,\alpha_{t,k}):t\geq 1\}$.

To show that, given $F$, the $(T_n,D_n)$ are i.i.d. distributed according to $F$, it suffices to note that for all $(t_1,d_1),\ldots,(t_n,d_n)\in S$ such that $t_i\geq 1$ for all $i=1,\ldots,n$,  it holds that
\begin{align*}
P&\left((T_1,D_1)=(t_1,d_1),\ldots,(T_n,D_n)=(t_n,d_n)\mid \Pi\right)  \\
&= \prod_{i=1}^{n} \left\{\Pi((t_i-1,0),(t_i,d_i))\prod_{t=0}^{t_i-1}\Pi((t,0),(t+1,0))\right\} \\
&= \prod_{i=1}^{n} \Delta F(t_i,d_i).
\end{align*}
Since $F$ is a function of $\Pi$, this concludes the proof.
\end{proof}

\begin{remark}\label{note:proofrecconddef}
Suppose that  $F$  is a random function satisfying points 1 and 2 of Definition \ref{defin:subbetastacy}. The proof of Theorem \ref{thm:reprthm} also shows that, if condition  (\ref{eqn:reccondSBS}) is satisfied, then $F$ is a random subdistribution function. This is because condition  (\ref{eqn:reccondSBS}) coincides with the recurrency condition in Lemma  \ref{lemma:recurrence}. Suppose instead that $F$ is a subdistribution function with probability 1. Then $\tilde{F}(t,c)=E[F(t,c)]$ is a subdistribution function and 
$$P(T_1\leq t, D_1=c)=\tilde{F}(t,c)=\frac{\alpha_{t,d}}{\sum_{c=0}^k \alpha_{t,c}} \prod_{u=1}^{t-1} \frac{\alpha_{u,0}}{\sum_{c=0}^k \alpha_{u,c}}$$
for all $t\geq 0$ and $c=1,\ldots,k$. Hence it must be
$$0=P(T_1=+\infty)=\lim_{t\rightarrow +\infty}P(X_0=(0,0),\ldots,X_t=(t,0))=\lim_{t\rightarrow+\infty}\prod_{u=1}^t \frac{\alpha_{u,0}}{\sum_{c=0}^k \alpha_{u,c}}.$$
Thus condition (\ref{eqn:reccondSBS}) must hold. Therefore, condition (\ref{eqn:reccondSBS}) is both necessary and sufficient for $F$ to be a random subdistribution function, justifiying the claim anticipated in Remark \ref{note:recconddef}.
\end{remark}

Another immediate consequence of Theorem \ref{thm:reprthm} is the following corollary:

\begin{corollary}
The sequence of random variables $\{(T_n,D_n):n\geq 1\}$ induced by the reinforced urn process $\{X_n:n\geq 0\}$ is exchangeable.
\end{corollary}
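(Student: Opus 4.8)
The plan is to derive exchangeability directly from the conditional i.i.d.\ representation established in Theorem \ref{thm:reprthm}, exploiting the elementary principle that any conditionally i.i.d.\ sequence is automatically exchangeable. First I would fix an arbitrary integer $n\geq 1$, a configuration $(t_1,d_1),\ldots,(t_n,d_n)\in S$ with each $t_i\geq 1$, and a permutation $\sigma$ of $\{1,\ldots,n\}$, with the goal of showing that the finite-dimensional law of $(T_1,D_1),\ldots,(T_n,D_n)$ is invariant under $\sigma$.

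The key step is to condition on the random subdistribution function $F$ furnished by Theorem \ref{thm:reprthm}. By that theorem, given $F$ the pairs $(T_i,D_i)$ are i.i.d.\ with common law $F$, so that
$$P\left((T_1,D_1)=(t_1,d_1),\ldots,(T_n,D_n)=(t_n,d_n)\mid F\right)=\prod_{i=1}^{n}\Delta F(t_i,d_i).$$
Since the right-hand side is a finite product, it is invariant under any reindexing of its factors; in particular it coincides with $\prod_{i=1}^{n}\Delta F(t_{\sigma(i)},d_{\sigma(i)})$, which is precisely the conditional probability of the $\sigma$-permuted configuration. Thus the two conditional probabilities agree pointwise as functions of $F$.

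Finally I would take expectations over the law of $F$ and invoke the tower property to remove the conditioning, obtaining
$$P\left((T_1,D_1)=(t_1,d_1),\ldots,(T_n,D_n)=(t_n,d_n)\right)=E\left[\prod_{i=1}^{n}\Delta F(t_i,d_i)\right],$$
and the same computation applied to the permuted sequence returns the identical value. As the configuration and the permutation were arbitrary, all finite-dimensional distributions of $\{(T_n,D_n):n\geq 1\}$ are symmetric, which is exactly the definition of exchangeability. I do not anticipate any genuine obstacle: the entire substance is carried by Theorem \ref{thm:reprthm}, and what remains is merely the symmetry of a finite product together with an application of the law of total probability, consistent with the statement being flagged as an immediate consequence.
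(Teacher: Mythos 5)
Your proof is correct and follows exactly the route the paper intends: the corollary is stated as an immediate consequence of Theorem \ref{thm:reprthm}, i.e.\ the conditionally i.i.d.\ representation given $F$ plus the permutation-invariance of the product $\prod_{i=1}^n \Delta F(t_i,d_i)$, which is what you spell out. The paper additionally sketches an alternative direct argument via Markov exchangeability and the exchangeability of $(0,0)$-blocks \citep{Diaconis1980}, but that is offered as a second route, not the primary one, so your proposal matches the paper's own reasoning.
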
 

This fact could also have been proven directly through an argument similar to that at the end of Section 2 of \citet{Muliere2000}. To elaborate, suppose that $\{Y_n: n\geq 0\}$ is a recurrent stochastic process with countable state space $S$ and such that $X_0=x_0\in S$ with probability one. Then a \emph{$x_0$-block} is defined as any finite sequence of states visited by process which begins from $x_0$ and ends at the state immediately preceding the successive visit to $x_0$. \citet{Diaconis1980} showed that if $\{Y_n: n\geq 0\}$ is also Markov exchangeable, then the sequence $\{B_n: n\geq 1\}$ of its $x_0$-blocks is exchangeable. Now, consider the reinforced urn process $\{Y_n: n\geq 0\}$ used by \citet{Muliere2000} for constructing the beta-Stacy process and described at the beginning of this section. This process is Markov exchangeable and so, under a recurrency condition, its sequence of $0$-blocks $\{B_n: n\geq 1\}$ is exchangeable. Consequently, so must be the corresponding sequence of total survival times $\{T_n=f(B_n): n\geq 1\}$, where $f(B)$ is the length of the $0$-block $B$ after excluding its initial element. Each $0$-block $B_n$ must have the form $(0,1,\ldots,t)$ for some $t\geq 1$ and $f((0,1,\ldots,t))=t$ for all $t\geq 1$.

In our setting, it can easily be seen that the $(0,0)$-blocks of the reinforced urn process $\{X_n:n\geq 0\}$ introduced in this section are finite sequences of states of the form $((0,0),(1,0),\ldots,(t-1,0),(t,c))$ for some $t\geq 1$ and $c=1,\ldots,k$. Any such $(0,0)$-block represents the entire observed history of an individual at risk of developing any one of the $k$ considered competing risks. For example, the history of Patient 1 in Figure \ref{fig:urns}(a) is represented by the $(0,0)$-block $B_1=((0,0),(1,0),(2,0),(3,2))$, while that of Patient 2 in Figure \ref{fig:urns}(b) is represented by the $(0,0)$-block $B_2=((0,0),(1,0),(2,1))$. If $\{X_n:n\geq 0\}$ is recurrent, by Lemma \ref{lemma:partexch} its sequence of $(0,0)$-blocks $\{B_n: n\geq 1\}$ is exchangeable. Hence, so must be the sequence $\{(T_n,D_n)=f(B_n):n\geq 1\}$, as claimed, where $f(B)$ is the last state in the $(0,0)$-block $B$. For the example in Figure \ref{fig:urns}, $f(B_1)=(T_1,D_1)=(3,2)$ and $f(B_2)=(T_2,D_2)=(2,1)$. 

\begin{remark}\label{remark:m}
Throughout this section, we have assumed for simplicity that each extracted ball is reinforced by only by single ball of the same color, i.e. $m=1$. In general, a number $m>0$ could be considered. It is possible to show (see for example \citealp{Amerio2004} or \citealp{Mezzetti2007}) that Theorem \ref{thm:reprthm} would still hold with $F$ distributed according a subdistribution beta-Stacy process with parameters $\{(\alpha_{t,0}/m,\ldots,\alpha_{t,k}/m):t\geq 1\}$. In particular, if $\alpha_{t,c} = \omega_t \Delta F_0(t,c)$ and $\alpha_{t,0} = \omega_t (1-\sum_{d=1}^k F_0(t,d))$, then $F\sim \textrm{SBS}(\omega/m,F_0)$. Hence, the number of balls $m$ used for reinforcement can be used to control concentration of the prior around its mean.  
\end{remark}


\section{Posterior distributions and censoring}\label{sec:post}

Suppose that $(T_i,D_i)$ is distributed according to some subdistribution function $F$ and $T_i>0$ with probability 1 for all $i=1,\ldots,n$. If the value $(T_i,D_i)$ can be potentially right-censored at the known time $c_i\in\{0,1,2,\ldots\}\cup\{+\infty\}$, then instead of observing the actual value $(T_i,D_i)$ one is only able to observe $(T_i^*,D_i^*)$, where $(T_i^*,D_i^*)=(T_i,D_i)$ if $T_i\leq c_i$ and $(T_i^*,D_i^*)=(c_i,0)$ if $T_i>c_i$ (if $c_i=+\infty$, then $(T_i,D_i)$ is not affected by censoring). The following theorem shows that the subdistribution beta-Stacy process has a useful conjugacy property even in presence of such right-censoring mechanism. 

\begin{theorem}\label{thm:conjugacy}
Suppose that $(T_1,D_1)$, $\ldots$, $(T_n,D_n)$ is an i.i.d. sample from a subdistribution function $F$ distributed as a subdistribution beta-Stacy process with parameters $\{(\alpha_{t,0},\ldots,\alpha_{t,k}):t\geq 1\}$. If $(T_1,D_1),\ldots,(T_n,D_n)$ are potentially right-censored at the known times $c_1,\ldots,c_n$, respectively, then the posterior distribution of $F$ given $(T_1^*,D_1^*)$,$\ldots$, $(T_n^*,D_n^*)$  is a subdistribution beta-Stacy with parameters $\{(\alpha_{t,0}^*,\ldots,\alpha_{t,k}^*):t\geq 1\}$, where $\alpha_{t,0}^*=\alpha_{t,0}+l_t+m_{t,0}$, $\alpha_{t,d}^*=\alpha_{t,d}+ m_{t,d}$ for all integers $t\geq 1$ and for $d=1,\ldots,k$,
where $l_t=\sum_{i=1}^n \Ind{T_i^*>t}$ and $m_{t,d}=\sum_{i=1}^n \Ind{T_i^*=t,D_i=d}$ for all $t\geq 1$ and $d=0,1,\ldots,k$.
\end{theorem}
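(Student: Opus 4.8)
The plan is to work not with $F$ directly but with the independent Dirichlet vectors $\{W_t=(W_{t,0},\ldots,W_{t,k}):t\geq 1\}$ that generate it through Equation (\ref{eqn:defsbs}). Since $F$ is a deterministic function of $\{W_t\}$ (and, conversely, the $W_t$ are recoverable from $F$), the prior law of $F$ is exactly the push-forward of the product prior $\prod_{t\geq1}\textrm{Dirichlet}_{k+1}(\alpha_{t,0},\ldots,\alpha_{t,k})$ on $\{W_t\}$, so it suffices to compute the posterior law of $\{W_t\}$ and recognise it as again a product of Dirichlet laws with the stated updated parameters. The useful preliminary observation is that $W_{u,0}=1-\sum_{d=1}^k W_{u,d}$, so that (\ref{eqn:defsbs}) becomes $\Delta F(t,c)=W_{t,c}\prod_{u=1}^{t-1}W_{u,0}$; summing over $c$ and telescoping then gives the survival probability $1-\sum_{d=1}^k F(c,d)=\prod_{u=1}^{c}W_{u,0}$.

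First I would write the likelihood of the observed, possibly censored, sample given $\{W_t\}$. Because the $c_i$ are fixed and known and, by Theorem \ref{thm:reprthm}, the $(T_i,D_i)$ are i.i.d.\ given $F$, the observations remain conditionally independent and each contributes one factor: an uncensored individual (so $D_i^*=D_i\geq 1$ and $T_i^*=T_i$) contributes $\Delta F(T_i^*,D_i^*)=W_{T_i^*,D_i^*}\prod_{u=1}^{T_i^*-1}W_{u,0}$, whereas a censored individual (so $T_i^*=c_i$, $D_i^*=0$) contributes $P(T_i>c_i\mid\{W_t\})=\prod_{u=1}^{c_i}W_{u,0}$. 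Taking the product over $i=1,\ldots,n$ and collecting the power of each $W_{t,d}$ separately gives a likelihood of the form $\prod_{t\geq1}\left(W_{t,0}^{a_t}\prod_{d=1}^k W_{t,d}^{m_{t,d}}\right)$, in which only the finitely many $t\leq\max_i T_i^*$ carry nonzero exponents.

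The main obstacle is the bookkeeping that identifies $a_t$ with $l_t+m_{t,0}$. For $d\geq1$ the exponent is immediate, since $W_{t,d}$ appears only in the leading factor of an uncensored observation with $T_i^*=t$ and $D_i^*=d$, giving $m_{t,d}$. For $d=0$ one must combine two asymmetric sources: an uncensored observation with $T_i^*=t'$ contributes a factor $W_{t,0}$ for every $t\leq t'-1$ (that is, for $t<T_i^*$), while a censored observation with $c_i=t'$ contributes $W_{t,0}$ for every $t\leq t'=T_i^*$. Splitting the censored contribution at the endpoint $t=T_i^*$, the terms with $T_i^*>t$ (censored or not) assemble into $l_t=\sum_i\Ind{T_i^*>t}$, while the residual censored terms with $T_i^*=t$ assemble into $m_{t,0}=\sum_i\Ind{T_i^*=t,D_i^*=0}$; hence $a_t=l_t+m_{t,0}$. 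This is the only step requiring genuine care, precisely because censoring at $c_i=t$ reinforces $W_{t,0}$ yet is not counted by $l_t$.

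Finally, the $W_t$ are a priori independent and the likelihood factorises across $t$, so the posterior factorises as well, and the posterior density of each $W_t$ is proportional to $W_{t,0}^{\alpha_{t,0}+l_t+m_{t,0}-1}\prod_{d=1}^k W_{t,d}^{\alpha_{t,d}+m_{t,d}-1}$ on the simplex, i.e.\ $\textrm{Dirichlet}_{k+1}(\alpha_{t,0}^*,\ldots,\alpha_{t,k}^*)$ with exactly the parameters claimed. By Definition \ref{defin:subbetastacy}, the posterior law of $F$ is therefore subdistribution beta-Stacy with parameters $\{(\alpha_{t,0}^*,\ldots,\alpha_{t,k}^*):t\geq1\}$. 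It remains only to note that these parameters still satisfy condition (\ref{eqn:reccondSBS}): since $l_t=m_{t,d}=0$ for every $t>\max_i T_i^*$, the updated tail factors coincide with the original ones, the finitely many altered factors change the infinite product by a fixed positive constant, and the limit therefore remains $0$.
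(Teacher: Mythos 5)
Your proof is correct, and it takes a genuinely different route from the paper's. You work directly with the independent Dirichlet vectors $W_t$ of Definition \ref{defin:subbetastacy}: you express the censored-data likelihood (which is exactly Equation (\ref{eqn:likelihood}) of Remark \ref{note:censlik}) as a product of powers of the $W_{t,d}$, carry out the exponent bookkeeping --- correctly isolating the one delicate point, namely that a censoring at time $t$ contributes a factor $W_{t,0}$ even though it is not counted by $l_t$, which is precisely why the update for $\alpha_{t,0}$ is $l_t+m_{t,0}$ rather than $l_t$ alone --- and conclude by coordinatewise Dirichlet conjugacy, also checking that the updated parameters still satisfy condition (\ref{eqn:reccondSBS}) because only finitely many factors of the infinite product change. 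The paper instead stays inside the reinforced-urn framework of Section \ref{sec:predconstr}: it reduces to $n=1$ and argues by induction, observes that seeing $(T_1^*,D_1^*)=(t,d)$ is equivalent to seeing the partial trajectory $X_0=(0,0),\ldots,X_{t-1}=(t-1,0),X_t=(t,d)$ of the urn process, and then invokes Corollary 2.21 of Muliere et al.\ (2000) to update the Dirichlet-process rows of the random transition matrix $\Pi$. Your argument is more elementary and self-contained (it needs only Definition \ref{defin:subbetastacy} and the likelihood, not the urn representation or the external corollary), and it treats all $n$ observations and the censored/uncensored asymmetry in a single pass; the paper's argument is shorter on the page because it outsources the conjugacy computation to the urn machinery, and it keeps the predictive characterization at the centre of the exposition. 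One minor remark: passing from the posterior of $\{W_t\}$ to the posterior of $F$ only requires that $F$ be a measurable function of $\{W_t\}$ (the posterior law of $F$ is the push-forward, which Definition \ref{defin:subbetastacy} then identifies as subdistribution beta-Stacy with the updated parameters), so the almost-sure recoverability of the $W_t$ from $F$ that you note, while true, is not actually needed.
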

\begin{proof}
To prove the thesis, it suffices it is true for $n=1$, as the general case will then follow from an immediate induction argument. To do so, first note that, with reference to the renforced urn process $\{X_n: n\geq 0\}$ of Section \ref{sec:predconstr}, condition (\ref{eqn:reccondSBS}) implies that $F$ can be seen as a function of some random transition matrix $\Pi$ as in the proof of Theorem \ref{thm:reprthm}. Assume now that $(T_1^*,D_1^*)=(t,d)$ for some $t\geq 1$ and $d=0,1,\ldots,k$. Since observing $(T_1^*,D_1^*)$ is equivalent to observing $X_0=(0,0),\ldots,X_{t-1}=(t-1,0),X_t=(t,d)$, Corollary 2.21 of \citet{Muliere2000} implies that, conditionally on $(T_1^*,D_1^*)=(t,d)$, the rows of $\Pi$ are independent and, for all $x\in S$, the parameter measure of the $x$-th row of $\Pi$ assigns mass $n_{(0,0)}(0)+1,\ldots,n_{(t-2,0)}(0)+1, n_{(t-1,0)}(d)+1$ to the states $(1,0),\ldots,(t-1,0),(t,d)$, respectively, and mass $n_{(t',d')}(c)$ to all other states $q((t',d'),c)\neq (1,0)$,$\ldots$, $(t-1,0),(t,d)$ in $S$. Since $\alpha_{t,d}=n_{(t-1,0)}(d)$ for all $t\geq 1$ and $d=0,1,\ldots,k$, it can now be seen that, conditionally on $(T_1^*,D_1^*)$, $F$ must be subdistribution beta-Stacy with parameters $\{(\alpha_{t,0}^*,\ldots,\alpha_{t,k}^*):t\geq 1\}$ defined by $\alpha_{t,0}^*=\alpha_{t,0}+\Ind{T_1^*>t}+\Ind{T_1^*=t, D_1^*=0}$, $\alpha_{t,d}^*=\alpha_{t,d}+\Ind{T_1^*=t, D_1^*=d}$ for all integers $t\geq 1$ for $d=1,\ldots,k$. 
\end{proof}

The following corollary is now a direct consequence of Equation (\ref{eqn:mean}) in Lemma \ref{lemma:moments}.

\begin{corollary}\label{note:preddist}
The predictive distribution $F^*(t,d)$ of a new (non-censored) observation $(T_{n+1},$ $D_{n+1})$ from $F$ having previously observed $(T_1^*,D_1^*),\ldots,(T_n^*,D_n^*)$ is determined by
\begin{align*}
\Delta F^*(t,d) &= P((T_{n+1},D_{n+1})=(t,d)\mid (T_1^*,D_1^*),\ldots,(T_n^*,D_n^*)) \\
&= E\left[\Delta F(t,d)\mid (T_1^*,D_1^*),\ldots,(T_n^*,D_n^*)\right] \\
&= \frac{\alpha_{t,d}^*}{\sum_{c=0}^k \alpha^*_{t,c}}\prod_{u=1}^{t-1} \frac{\alpha^*_{u,0}}{\sum_{c=0}^k \alpha^*_{u,c}}.
\end{align*}
for all $t\geq 1$ and $d=1,\ldots,k$.
\end{corollary}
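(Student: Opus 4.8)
The plan is to reduce the corollary to a direct combination of Theorem \ref{thm:conjugacy} and Equation (\ref{eqn:mean}) of Lemma \ref{lemma:moments}, so essentially no fresh computation is required. First I would establish the second equality in the displayed chain, namely that the predictive probability of the new observation coincides with the posterior mean of the jump $\Delta F(t,d)$. By Theorem \ref{thm:reprthm}, given $F$ the pairs $(T_i,D_i)$ are i.i.d.\ with common subdistribution function $F$; moreover each censored observation $(T_i^*,D_i^*)$ is a deterministic function of $(T_i,D_i)$ and the known constant $c_i$. Hence, conditionally on $F$, the fresh (uncensored) draw $(T_{n+1},D_{n+1})$ is independent of $(T_1^*,D_1^*),\ldots,(T_n^*,D_n^*)$, so that $P((T_{n+1},D_{n+1})=(t,d)\mid F,(T_1^*,D_1^*),\ldots,(T_n^*,D_n^*))=\Delta F(t,d)$. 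Taking the conditional expectation given the observed data and invoking the tower property then yields $P((T_{n+1},D_{n+1})=(t,d)\mid (T_1^*,D_1^*),\ldots,(T_n^*,D_n^*))=E[\Delta F(t,d)\mid (T_1^*,D_1^*),\ldots,(T_n^*,D_n^*)]$.

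Next I would invoke Theorem \ref{thm:conjugacy}, which states that the posterior law of $F$ given $(T_1^*,D_1^*),\ldots,(T_n^*,D_n^*)$ is again a subdistribution beta-Stacy process, now carrying the updated parameters $\{(\alpha_{t,0}^*,\ldots,\alpha_{t,k}^*):t\ge 1\}$. Consequently the posterior mean $E[\Delta F(t,d)\mid (T_1^*,D_1^*),\ldots,(T_n^*,D_n^*)]$ is exactly the (unconditional) mean of $\Delta F(t,d)$ for a subdistribution beta-Stacy process with the starred parameters. I would then apply Equation (\ref{eqn:mean}) verbatim, substituting $\alpha_{u,c}^*$ for $\alpha_{u,c}$ throughout, to read off the claimed closed form $\frac{\alpha_{t,d}^*}{\sum_{c=0}^k \alpha^*_{t,c}}\prod_{u=1}^{t-1}\frac{\alpha^*_{u,0}}{\sum_{c=0}^k \alpha^*_{u,c}}$.

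The only step requiring genuine care — rather than a real obstacle — is the first one: one must record explicitly that, conditionally on $F$, the new observation is independent of the observed censored data, so that the predictive probability collapses to the posterior expectation of $\Delta F(t,d)$. Once this conditional-i.i.d.\ structure inherited from Theorem \ref{thm:reprthm} is in place, the conjugacy theorem identifies the relevant posterior parameters and the moment formula supplies the exact expectation, which is precisely why the statement is phrased as an immediate corollary of Equation (\ref{eqn:mean}).
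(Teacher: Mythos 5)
Your proposal is correct and follows essentially the same route as the paper, which presents the corollary as a direct consequence of Theorem \ref{thm:conjugacy} together with Equation (\ref{eqn:mean}) applied to the updated parameters $\alpha^*_{t,c}$. Your explicit justification of the step identifying the predictive probability with the posterior mean of $\Delta F(t,d)$ (via the conditional i.i.d.\ structure from Theorem \ref{thm:reprthm}) is a detail the paper leaves implicit, but it is the intended argument.
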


The following result instead follows from Corollary \ref{note:preddist} and Remark \ref{note:priorspecification}.

\begin{corollary}\label{note:postsbs}
Assume that $F\sim \textrm{SBS}(\omega,F_0)$ a priori. Then, the posterior distribution of $F$ given the observed values of $(T_1^*,D_1^*),\ldots,(T_n^*,D_n^*)$ is $\textrm{SBS}(\omega^*,F^*)$, where 
\begin{align*}
F^*(t,c)& =\sum_{u=1}^t S^*(u-1)\Delta A^*_c(u), \\
A^*_c(t)&=\sum_{u=1}^t \frac{\omega_u\Delta F_0(u,c)+m_{u,c}}{\omega_u(1-\sum_{d=1}^k F_0(u-1,d))+l_u+\sum_{d=0}^k m_{u,d}},\\
S^*(t)&=\prod_{u=1}^t\left(1-\frac{\omega_u\sum_{d=1}^k \Delta F_0(u,d)+\sum_{d=1}^k m_{u,d}}{\omega_u(1-\sum_{d=1}^k F_0(u-1,d))+l_u+\sum_{d=0}^k m_{u,d}}\right),
\end{align*}
and 
$$\omega_t^*=\frac{\omega_t\left[1-\sum_{d=1}^k F_0(t,d)\right]+l_t+m_{t,0}}{1-\sum_{d=1}^k F^*(t,d)}.$$ 
\end{corollary}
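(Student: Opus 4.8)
The plan is to start from Theorem \ref{thm:conjugacy}, which already establishes that the posterior is a subdistribution beta-Stacy process with explicit parameters $\alpha_{t,0}^*=\omega_t(1-\sum_{d=1}^k F_0(t,d))+l_t+m_{t,0}$ and $\alpha_{t,d}^*=\omega_t\Delta F_0(t,d)+m_{t,d}$ for $d=1,\dots,k$, obtained by substituting the $\textrm{SBS}(\omega,F_0)$ prior parameters into the update rule. The entire content of the corollary is then to re-express this posterior in the $\textrm{SBS}(\omega^*,F^*)$ form, i.e. to exhibit a subdistribution function $F^*$ and positive weights $\omega^*$ with $\alpha_{t,c}^*=\omega_t^*\Delta F^*(t,c)$ for $c=1,\dots,k$ and $\alpha_{t,0}^*=\omega_t^*(1-\sum_{d=1}^k F^*(t,d))$.

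First I would pin down $F^*$. By Remark \ref{note:priorspecification}, in any $\textrm{SBS}(\omega^*,F^*)$ representation the parameter $F^*$ must equal the predictive (mean) subdistribution function of the process; hence $F^*$ is forced to be the posterior predictive distribution given by Corollary \ref{note:preddist}, namely $\Delta F^*(t,d)=\frac{\alpha_{t,d}^*}{\sum_{c=0}^k\alpha_{t,c}^*}\prod_{u=1}^{t-1}\frac{\alpha_{u,0}^*}{\sum_{c=0}^k\alpha_{u,c}^*}$. The remaining work is to rewrite this in the hazard form claimed in the statement. The key computation is the common denominator: summing the posterior parameters and telescoping $\sum_{d=1}^k\Delta F_0(u,d)=\sum_{d=1}^k F_0(u,d)-\sum_{d=1}^k F_0(u-1,d)$ gives $\sum_{d=0}^k\alpha_{u,d}^*=\omega_u(1-\sum_{d=1}^k F_0(u-1,d))+l_u+\sum_{d=0}^k m_{u,d}$, which is exactly the denominator appearing in $A_c^*$ and $S^*$.

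With this denominator in hand, I would identify $\Delta A_c^*(u)=\alpha_{u,c}^*/\sum_{d=0}^k\alpha_{u,d}^*$ and $S^*(t)=\prod_{u=1}^t \alpha_{u,0}^*/\sum_{d=0}^k\alpha_{u,d}^*$, the latter because the subtracted fraction in $S^*$ equals $\sum_{d=1}^k\alpha_{u,d}^*/\sum_{d=0}^k\alpha_{u,d}^*=1-\alpha_{u,0}^*/\sum_{d=0}^k\alpha_{u,d}^*$. Comparing with the predictive from Corollary \ref{note:preddist} then shows $\Delta F^*(t,c)=S^*(t-1)\Delta A_c^*(t)$, whence $F^*(t,c)=\sum_{u=1}^t S^*(u-1)\Delta A_c^*(u)$, matching the claim and confirming (via the Remark following Definition \ref{def:subdistf}) that $A_c^*$ and $S^*$ are the cumulative hazard and survival function of $F^*$.

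Finally I would recover $\omega^*$. The $c=0$ constraint $\alpha_{t,0}^*=\omega_t^*(1-\sum_{d=1}^k F^*(t,d))$ solves immediately to the stated $\omega_t^*=\alpha_{t,0}^*/(1-\sum_{d=1}^k F^*(t,d))$, since its numerator $\alpha_{t,0}^*$ is precisely $\omega_t(1-\sum_{d=1}^k F_0(t,d))+l_t+m_{t,0}$. The main (and essentially only) obstacle is to check that this single choice of $\omega_t^*$ is simultaneously consistent with the $c\ge 1$ constraints $\alpha_{t,c}^*=\omega_t^*\Delta F^*(t,c)$. This reduces to verifying $\omega_t^*=\sum_{d=0}^k\alpha_{t,d}^*/S^*(t-1)$, which follows from the survival recursion $S^*(t)=S^*(t-1)\,\alpha_{t,0}^*/\sum_{d=0}^k\alpha_{t,d}^*$ together with $1-\sum_{d=1}^k F^*(t,d)=S^*(t)$; beyond this reconciliation the argument is routine bookkeeping, with positivity of $\omega_t^*$ and of $1-\sum_{d=1}^k F^*(t,d)$ guaranteed by recurrence (condition (\ref{eqn:reccondSBS})).
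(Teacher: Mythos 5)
Your proposal is correct and follows exactly the route the paper intends: the paper offers no written proof beyond "follows from Corollary \ref{note:preddist} and Remark \ref{note:priorspecification}," and your argument fills in precisely that chain — posterior parameters from Theorem \ref{thm:conjugacy}, identification of $F^*$ with the posterior predictive, the telescoping of the denominator, and the consistency check for $\omega_t^*$ via $S^*(t)=1-\sum_{d=1}^k F^*(t,d)$. The only cosmetic point is that positivity of $S^*(t)$ for finite $t$ comes from positivity of the $\alpha^*_{u,0}$ rather than from condition (\ref{eqn:reccondSBS}), which is instead what guarantees $S^*(t)\rightarrow 0$ so that $F^*$ is a proper subdistribution function.
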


\begin{remark} \label{note:preddistconv}
As $\max_{u=1,\ldots,t}(\omega_u)\rightarrow 0$, $S^*(t)$ converges to the discrete-time Kaplan-Meier estimate 
$\widehat{S}(t)=\prod_{u=1}^t (1-[\sum_{d=1}^k m_{u,d}]/[l_u+\sum_{d=0}^k m_{u,d}]),$ while $A^*_c(t)$ converges to the Nelson-Aalen estimate $\widehat{A}_c(t)=\sum_{u=1}^t m_{u,c}/(l_u+\sum_{d=0}^k m_{u,d})$ for all times $t\geq 1$ for which $\widehat{S}(t)$ and the $\widehat{A}_c(t)$ are defined (i.e. such that $l_t+\sum_{d=0}^k m_{t,d}]>0$). All in all, $F^*(t,c)$, which coincides with the optimal Bayesian estimate of $F$ under a squared-error loss, converges to $\widehat{F}(t,c)=\sum_{u=1}^t \widehat{S}(u-1)\Delta \widehat{A}_c(u)$, the classical non-parametric estimate of $F(t,c)$ of \citet[Chapter 8]{Kalbfleisch2002}, for all times $t\geq 1$ for which this is defined. Conversely, if $\min_{u=1,\ldots,t}(\omega_u)\rightarrow +\infty$, then $S^*(t)$ converges to $1-\sum_{d=1}^k F_0(t,d)$, $A_c(t)$ converges to the corresponding cumulative hazard of $F_0$, and therefore $F^*(t,c)$ converges to the prior mean $F_0(t,c)$ for all times $t\geq 1$ and $c=1,\ldots,k$. 
\end{remark}

\begin{remark} (Censored data likelihood)\label{note:censlik}
Given a sample $(t_1^*,d_1^*)$,$\ldots$, $(t_n^*,d_n^*)$ of censored observations from a subdistribution function $F(t,c)$, define $z_i=\Ind{d_i^*\neq 0}$ for all $i=1,\ldots,n$. It can then be shown that the likelihood function for $F$ is  
\begin{equation}\label{eqn:likelihood}
\begin{aligned}
L(F) &= P\left((T_1^*,D_1^*)=(t_1^*,d_1^*),\ldots,(T_n^*,D_n^*)=(t_n^*,d_n^*)\mid F\right) \\
&= \prod_{i=1}^n \Delta F(t_i^*,d_i^*)^{z_i} \left[1-\sum_{d=1}^k F(t^*_i,d)\right]^{1-z_i}.
\end{aligned}
\end{equation}
\end{remark}

So far the censoring times $c_1,\ldots,c_n$ have been considered fixed and known. Theorem \ref{thm:conjugacy} however continues to hold also in the following more general setting in which censoring times are random: let the censored data be defined as $T_i^*=\min(T_i,C_i)$ and $D_i^*=\Ind{T_i\leq C_i}$ for all $i=1,\ldots,n$, where i) $C_1,\ldots,C_n$ are independent random variable with common distribution function $H(t)$, ii) conditional on $F$ and $H$, $(T_1,D_1),\ldots,(T_n,D_n)$ and $C_1,\ldots,C_n$ are independent, and iii) F and H are a priori independent. Adapting the terminology of Heitjan and Rubin (\citeyear{Heitjan1991,Heitjan1993}), in this case the random censoring mechanism is said to be \emph{ignorable}.

\begin{theorem}\label{thm:censoringgeneral}
If censoring is random and ignorable and $F$ is a priori a subdistribution beta-Stacy process, then  the marginal likelihood for $F$ is proportional to the likelihood $L(F)$ defined in Equation (\ref{eqn:likelihood}). Consequently, the posterior distribution of $F$ given $(T_1^*,D_1^*)$, $\ldots$, $(T_n^*,D_n^*)$ is the same as that described in Theorem \ref{thm:conjugacy}. 
\end{theorem}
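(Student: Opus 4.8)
The plan is to derive the marginal likelihood of $F$ under the random censoring model and to show that it equals the likelihood $L(F)$ of Equation (\ref{eqn:likelihood}) up to a multiplicative factor not involving $F$. Once this is established, the stated posterior follows at once: the prior on $F$ is the same subdistribution beta-Stacy process as in Theorem \ref{thm:conjugacy}, and the posterior is proportional to the prior times the marginal likelihood, so proportionality of the marginal likelihood to $L(F)$ forces the posterior to coincide with the one computed in the fixed-censoring setting.

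First I would compute, for each observation $i$, the conditional probability $P((T_i^*,D_i^*)=(t_i^*,d_i^*)\mid F,H)$, splitting into two cases according to whether the observation is censored. In the uncensored case ($d_i^*\neq 0$) the event $\{T_i^*=t_i^*,D_i^*=d_i^*\}$ coincides with $\{T_i=t_i^*,D_i=d_i^*,C_i\geq t_i^*\}$; invoking the conditional independence of $(T_i,D_i)$ and $C_i$ given $F,H$ from assumption (ii), together with the fact that $(T_i,D_i)$ depends only on $F$ and $C_i$ only on $H$, this factors as $\Delta F(t_i^*,d_i^*)\,[1-H(t_i^*-1)]$. In the censored case ($d_i^*=0$) the event $\{T_i^*=t_i^*,D_i^*=0\}$ coincides with $\{C_i=t_i^*,T_i>t_i^*\}$, which factors as $[1-\sum_{d=1}^k F(t_i^*,d)]\,\Delta H(t_i^*)$. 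Writing $z_i=\Ind{d_i^*\neq 0}$ as in Remark \ref{note:censlik}, both cases combine into
$$P\left((T_i^*,D_i^*)=(t_i^*,d_i^*)\mid F,H\right)=\left[\Delta F(t_i^*,d_i^*)^{z_i}\left(1-\sum_{d=1}^k F(t_i^*,d)\right)^{1-z_i}\right]\left[(1-H(t_i^*-1))^{z_i}\Delta H(t_i^*)^{1-z_i}\right].$$

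Next, taking the product over $i=1,\ldots,n$ (justified by the conditional independence of the observations given $F$ and $H$), the joint conditional probability factors as $L(F)\cdot L_H$, where $L(F)$ is precisely the expression in Equation (\ref{eqn:likelihood}) and $L_H=\prod_{i=1}^n (1-H(t_i^*-1))^{z_i}\Delta H(t_i^*)^{1-z_i}$ depends only on $H$. Because $F$ and $H$ are a priori independent by assumption (iii), the marginal likelihood of $F$ is obtained by integrating out $H$, giving $P(\text{data}\mid F)=L(F)\int L_H\,dP(H)$, where the remaining integral is a constant $\kappa$ not depending on $F$. Hence the marginal likelihood equals $\kappa\,L(F)$, which is proportional to $L(F)$, and the posterior is therefore identical to that of Theorem \ref{thm:conjugacy}.

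The main obstacle I anticipate lies entirely in the discrete-time bookkeeping: one must carefully distinguish $\{C_i\geq t_i^*\}$ from $\{C_i=t_i^*\}$ and $\{T_i>t_i^*\}$ from $\{T_i\geq t_i^*\}$ so that the $F$-factor reproduces \emph{exactly} $L(F)$ rather than a version shifted by one time unit, and one must make the conditional-independence factorization fully rigorous from assumptions (i)–(iii) before marginalizing. The conceptual content, namely the ignorability of the censoring mechanism in the sense of Heitjan and Rubin, is standard once this factorization is in place.
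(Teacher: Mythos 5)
Your proof is correct and follows essentially the same route as the paper: factor the joint conditional likelihood as $L(F)$ times an $H$-only term, then integrate out $H$ using the prior independence of $F$ and $H$. The only (immaterial) divergence is in the $H$-factor for uncensored observations, where you write $1-H(t_i^*-1)=P(C_i\geq t_i^*)$ while the paper writes $1-H(t_i^*)$; since this factor does not involve $F$, it has no bearing on the conclusion.
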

\begin{proof}
The likelihood function for $F$ and $H$ given a sample $(t_1^*,d_1^*)$,$\ldots$, $(t_n^*,d_n^*)$ of observations affected from ignorable random censoring is 
\begin{align*}
L^*(F,H) &= P\left((T_1^*,D_1^*)=(t_1^*,d_1^*),\ldots,(T_n^*,D_n^*)=(t_n^*,d_n^*)\mid F,H\right) \\
&= L(F) \prod_{i=1}^n \Delta H(t_i^*)^{1-z_i} \left[1-H(t^*_i)\right]^{z_i} \\
&= L(F)L^*(H),
\end{align*}
where $L$ and the $z_i$ are defined as in Equation \ref{eqn:likelihood}. Therefore, the marginal likelihood for $F$ is $L^{\textrm{marginal}}(F) = L(F)E_H[L^*(H)] \propto L(F)$, where the constant of proportionality only depends on the data and $E_H[\cdot]$ represents expectation with respect to the prior distribution of $H$. As a consequence, the posterior distribution of $F$ can be computed ignoring the randomness in the censoring times $C_1,\ldots,C_n$ by considering their observed values as fixed and their unobserved values as fixed to $+\infty$. Hence, if $F$ is a priori a subdistribution beta-Stacy process, then its posterior distribution is the same as in Theorem \ref{thm:conjugacy}. 
\end{proof}

\begin{remark}
The update-rule of Theorem \ref{thm:conjugacy} could be shown to hold under even more general censoring mechanisms. In fact, the marginal likelihood for $F$ remains proportional to $L(F)$ as long as i) the distribution $H$ of censoring times is independent of $F$ and ii) censoring only depends on the past and outside variation \citep{Kalbfleisch2002}.
\end{remark}

\section{Relation with other prior processes}\label{sec:othproc}

\subsection{Relation with the beta-Stacy process}

By construction, the subdistribution beta-Stacy process can be regarded as a direct generalization of the beta-Stacy process. In fact, the two processes are linked with each other, as highlighted by the following theorem:

\begin{theorem}\label{thm:subbetabeta}
A random subdistribution function $F$ is a discrete-time subdistribution beta-Stacy process with parameters $\{(\alpha_{t,0},\ldots,\alpha_{t,k}):t\geq 1\}$ if and only if i) $G(t)=\sum_{d=1}^k F(t,d)$ is a discrete-time beta-Stacy process with parameters $\{(\sum_{d=1}^k \alpha_{t,d},\alpha_{t,0}):t\geq 1\}$ and ii) $\Delta F(t,c) = V_{t,c}\Delta G(t)$ for all $t\geq 1$ and $c=1,\ldots,k$, where $\{V_t=(V_{t,1},\ldots,V_{t,k}): t\geq 1\}$ is a sequence of independent random vectors independent of $G$ and such that $V_t\sim \textrm{Dirichlet}_k(\alpha_{t,1},\ldots,\alpha_{t,k})$ for all $t\geq 1$ (where, if $k=1$, we let the distribution $\textrm{Dirichlet}_1(\alpha_{t,1})$ be the point mass at 1).
\end{theorem}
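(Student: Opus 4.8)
The plan is to reduce everything to a single classical fact about the Dirichlet distribution, its \emph{aggregation/decimation} (or ``lumping'') property together with its converse. Concretely: if $(W_0,W_1,\ldots,W_k)\sim\mathrm{Dirichlet}_{k+1}(\alpha_0,\alpha_1,\ldots,\alpha_k)$ and one sets $U=\sum_{d=1}^k W_d=1-W_0$ and $V_c=W_c/U$, then $U\sim\mathrm{Beta}(\sum_{d=1}^k\alpha_d,\alpha_0)$, $(V_1,\ldots,V_k)\sim\mathrm{Dirichlet}_k(\alpha_1,\ldots,\alpha_k)$, and $U$ is \emph{independent} of $(V_1,\ldots,V_k)$; conversely, independent $U\sim\mathrm{Beta}(\sum_{d=1}^k\alpha_d,\alpha_0)$ and $(V_1,\ldots,V_k)\sim\mathrm{Dirichlet}_k(\alpha_1,\ldots,\alpha_k)$ recombine into $(1-U,V_1 U,\ldots,V_k U)\sim\mathrm{Dirichlet}_{k+1}(\alpha_0,\alpha_1,\ldots,\alpha_k)$. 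For $k=1$ this degenerates with $V_1\equiv 1$, matching the stated convention. I would first record this fact (with a reference), and separately note that the recurrency condition (\ref{eqn:reccondSBS}) for $\{(\alpha_{t,0},\ldots,\alpha_{t,k})\}$ is literally identical to condition (\ref{eqn:reccondBS}) for the beta-Stacy parameters $\{(\sum_{d=1}^k\alpha_{t,d},\alpha_{t,0})\}$, since $\gamma_u/(\beta_u+\gamma_u)=\alpha_{u,0}/\sum_{d=0}^k\alpha_{u,d}$; this keeps both sides of the equivalence simultaneously well-posed as genuine (sub)distribution functions.

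For the forward direction, I would assume $F$ is SBS, so $\Delta F(t,c)=W_{t,c}\prod_{u=1}^{t-1}(1-\sum_{d=1}^k W_{u,d})$ with $W_t\sim\mathrm{Dirichlet}_{k+1}(\alpha_{t,0},\ldots,\alpha_{t,k})$ independent across $t$. Setting $U_t=\sum_{d=1}^k W_{t,d}$ and $V_{t,c}=W_{t,c}/U_t$, summing the increments over $c$ gives $\Delta G(t)=\sum_{c=1}^k\Delta F(t,c)=U_t\prod_{u=1}^{t-1}(1-U_u)$, which is exactly the beta-Stacy representation of $G$ with $U_t\sim\mathrm{Beta}(\sum_{d=1}^k\alpha_{t,d},\alpha_{t,0})$, and independence of the $W_t$ forces independence of the $U_t$, establishing (i). Likewise $\Delta F(t,c)=V_{t,c}U_t\prod_{u=1}^{t-1}(1-U_u)=V_{t,c}\Delta G(t)$, giving the decomposition in (ii) with $V_t\sim\mathrm{Dirichlet}_k(\alpha_{t,1},\ldots,\alpha_{t,k})$; the decimation property yields, for each fixed $t$, independence of $V_t$ from $U_t$.

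For the converse, I would assume (i) and (ii) and write $\Delta G(t)=U_t\prod_{u=1}^{t-1}(1-U_u)$ with independent $U_t\sim\mathrm{Beta}(\sum_{d=1}^k\alpha_{t,d},\alpha_{t,0})$ (the beta-Stacy representation of $G$), while $V_t\sim\mathrm{Dirichlet}_k(\alpha_{t,1},\ldots,\alpha_{t,k})$ are independent among themselves and of $G$. Defining $W_{t,0}=1-U_t$ and $W_{t,c}=V_{t,c}U_t$, the converse half of the Dirichlet property gives $W_t\sim\mathrm{Dirichlet}_{k+1}(\alpha_{t,0},\ldots,\alpha_{t,k})$, independent across $t$, and since $\sum_{d=1}^k W_{t,d}=U_t$ one checks directly that $W_{t,c}\prod_{u=1}^{t-1}(1-\sum_{d=1}^k W_{u,d})=V_{t,c}U_t\prod_{u=1}^{t-1}(1-U_u)=\Delta F(t,c)$, so $F$ satisfies Definition \ref{defin:subbetastacy}.

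The computations are routine; the one point requiring care is the independence bookkeeping. The decimation property only gives independence of $V_t$ and $U_t$ \emph{within} a single time $t$, so I must argue that this within-$t$ independence, combined with mutual independence of the $W_t$ across $t$ (forward) or of the pairs $(U_t,V_t)$ across $t$ (backward), upgrades to the global independence of the whole collection $\{V_t:t\geq 1\}$ from $G$ that the theorem asserts. The clean way to make this airtight, and to turn both directions into a single equivalence of laws, is to observe that $W_t\mapsto(U_t,V_t)$ and $(U_t,V_t)\mapsto W_t$ are mutually inverse measurable bijections (almost everywhere on the relevant supports); the Dirichlet property is then precisely the statement that they carry the law $\mathrm{Dirichlet}_{k+1}$ to the product law $\mathrm{Beta}\otimes\mathrm{Dirichlet}_k$ and back, and applying this coordinatewise in $t$ identifies the law of $\{W_t\}$ with that of $\{(U_t,V_t)\}$, which is exactly the claimed equivalence.
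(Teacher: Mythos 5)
Your proposal is correct and follows essentially the same route as the paper's proof: the same preliminary identification of the two recurrency conditions, the same decomposition $U_t=\sum_{d=1}^k W_{t,d}$, $V_{t,c}=W_{t,c}/U_t$ for the forward direction, and the same recombination $W_t=(1-U_t,U_tV_{t,1},\ldots,U_tV_{t,k})$ for the converse, with the aggregation/decimation property of the Dirichlet distribution playing the role of the cited results of Ng (2011) and Sivazlian (1981). Your explicit treatment of the independence bookkeeping via the coordinatewise measurable bijection is slightly more careful than the paper's, but it is not a different argument.
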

\begin{proof}
Before proceeding, first observe that $\{(\alpha_{t,0},\ldots,\alpha_{t,k}):t\geq 1\}$ satisfies the recurrency condition of Equation (\ref{eqn:reccondSBS}) if and only if $\{(\sum_{d=1}^k \alpha_{t,d},\alpha_{t,0}):t\geq 1\}$ satisfies the recurrency condition for the beta-Stacy process, i.e. Equation (\ref{eqn:reccondBS}) with $\beta_t=\sum_{d=1}^k \alpha_{t,d}$ and $\gamma_t=\alpha_{t,0}$. 

Now, to prove the ``if'' part of the thesis, suppose that the random subdistribution function $F$ is subdistribution beta-Stacy with parameters $\{(\alpha_{t,0},\ldots,\alpha_{t,k}):t\geq 1\}$. Let $G(t)=\sum_{d=1}^k F(t,d)$ for all integers $t\geq 0$ and define $U_t=\sum_{d=1}^k W_{t,k}$ and $V_{t,c}=W_{t,c}/U_t$ for all $t\geq 1$ and $c=1,\ldots,k$. From these definitions it is easy to check that $\Delta F(t,c) = V_{t,c}\Delta G(t)$ for all $t\geq 1$. Additionally, by standard properties of the Dirichlet distribution \citep[Propriety 1B]{Sivazlian1981} and by the independence of the $W_t$, $\{U_t:t\geq 1\}$ is a sequence of independent random variables such that $U_t\sim Beta(\sum_{d=1}^k \alpha_{t,k},\alpha_{t,0})$. Moreover, from Theorem 2.5 of \citet{Ng2011} it follows that $\{V_{t}=(V_{t,1},\ldots,V_{t,k}):t\geq 1\}$ is a sequence of independent random vectors such that $V_t\sim \textrm{Dirichlet}_k(\alpha_{t,1},\ldots,\alpha_{t,k})$ for all $t\geq 1$, independently of $\{U_t:t\geq 1\}$. Moreover, $G(0)=0$ with probability one because $G(0)=\sum_{d=1}^k F(0,d)$ and $F(0,d)=0$ with probability 1 for all $d=1,\ldots,k$. Continuing, since $\Delta G(t)=\sum_{d=1}^k \Delta F(t,d)$ for all $t\geq 1$, it follows that 
$$\Delta G(t)=\sum_{d=1}^k\left\{ W_{t,d}\prod_{u=1}^{t-1}\left(1-\sum_{c=1}^k W_{u,c}\right)\right\}=U_t\prod_{u=1}^{t-1}(1-U_u)$$ 
for all $t\geq 1$. Thus $G$ is a beta-Stacy process with parameters $\{(\sum_{d=1}^k \alpha_{t,k},\alpha_{t,0}):t\geq 1\}$. 

To prove the ``only if'' part of the thesis, suppose instead that $G$ is a beta-Stacy process with parameters $\{(\sum_{d=1}^k \alpha_{t,d},\alpha_{t,0}):t\geq 1\}$ and that $\{V_t=(V_{t,1},\ldots,V_{t,k}): t\geq 1\}$ is a sequence of independent random vectors satisfying conditions (a) and (b). Since $0=G(0)=\sum_{d=1}^{k}F(0,d)$ with probability 1, and since it must also be $F(t,d)\geq 0$ with probability 1 for all $t$ and $d$, it follows that $F(0,d)=0$ with probability 1 for all $d$. To continue, define $W_t=(W_{t,0},W_{t,1},\ldots,W_{t,k})=(1-U_t,U_t V_{t,1},\ldots,U_t V_{t,k})$ for all $t\geq 1$. It can be seen that the $W_t$ are independent. Moreover, since $U_t$ and $V_t$ are independent and since $1-U_t\sim Beta(\alpha_{t,0},\sum_{d=1}^k \alpha_{t,d})$, from  Theorem 2.2 of \citet{Ng2011}, it follows that $W_t$ has the same distribution as
$$\left(Y_{t,0},Y_{t,1}(1-Y_{t,0}),Y_{t,2}\prod_{d=0}^1(1-Y_{t,d}),\ldots,Y_{t,k-1}\prod_{d=0}^{k-2}(1-Y_{t,d}),\prod_{d=0}^{k-1}(1-Y_{t,d})\right)$$
where the $Y_{t,c}$ are independent random variables with $Y_{t,c}\sim Beta\left(\alpha_{t,c},\sum_{d=c+1}^k \alpha_{t,d}\right)$ for all $c=0,\ldots,k$. Again from Theorem 2.2 of \citet{Ng2011} it thus follows that $W_t\sim \textrm{Dirichlet}_{k+1}(\alpha_{t,0}$, $\ldots$, $\alpha_{t,k})$ for all $t\geq 1$. Since $\Delta F(t,c) = V_{t,c}\Delta G(t)$ for all $t\geq 1$, from the definition of a beta-Stacy process it now follows that 
\begin{align*}
\Delta F(t,d) = V_{t,d} U_{t}\prod_{u=1}^{t-1}\left(1-\sum_{c=1}^k V_{t,c} U_{u}\right) = W_{t,d} \prod_{u=1}^{t-1}\left(1-\sum_{c=1}^k W_{t,c}\right).
\end{align*}
Hence, $F$ is subdistribution beta-Stacy with parameters $\{(\alpha_{t,0},\ldots,\alpha_{t,k}):t\geq 1\}$.
\end{proof}

\subsection{Relation with the beta process}

Suppose $A(t)=(A_1(t),\ldots,A_k(t))$ collects the cumulative hazards of the subdistribution function $F(t,c)$ and let $\Delta A(t)=(\Delta A_1(t),\ldots,\Delta A_k(t))$, $A_0(t)=\sum_{d=1}^k A_d(t)$. Then, following Hjort \cite[Section 2]{Hjort1990}, a discrete time beta-process prior for non-homogeneous Markov Chains with parameters $\{(\alpha_{t,0},\ldots,\alpha_{t,k}):t\geq 1\}$ could be specified for $A(t)$ by independently letting 
$(1- \Delta A_0(t),\Delta A_1(t),\ldots, \Delta A_k(t))$ have a $\textrm{Dirichlet}(\alpha_{t,0},\ldots,\alpha_{t,k})$ distribution for all $t\geq 1$. In such case, from Definition \ref{defin:subbetastacy} it would follow that $F$ is subdistribution beta-Stacy with the same set of parameters. The converse is also true, since if $F$ is subdistribution beta-Stacy then it can be easily seen from Definition \ref{defin:subbetastacy} that $(1-\Delta A_0(t),\Delta A_1(t),\ldots, \Delta A_k(t))=(W_{t,0},W_{t,1},\ldots,W_{t,k})$.
Thus, if interest is in the subdistribution function $F(t,c)$ itself, one should consider the subdistribution beta-Stacy process, whereas if interest is in the cumulative hazards $A(t)$, one should consider the beta process for non-homogeneous Markov Chains. This equivalence parallels an analogous relation between the usual beta-Stacy and beta processes \citep{Walker1997}. 

\subsection{Relation with the beta-Dirichlet process}

The subdistribution beta-Stacy process is also related to the discrete-time version of the \emph{beta-Dirichlet} process, a generalization of Hjort's beta process prior \citep{Hjort1990} introduced by \citet{Kim2012}. The cumulative hazards $\{A(t):t\geq 1\}$ are said to be a beta-Dirichlet process with parameters $\{(\beta_{t,1},\beta_{t,2},\gamma_{t,1},\ldots,\gamma_{t,k}):t\geq 1\}$ if i) the $\Delta A(t)$ are independent, ii) $\Delta A_0(t)\sim Beta(\beta_{t,1},\beta_{t,2})$ for all $t\geq 1$, and iii) $\Delta A(t)/\Delta A_0(t)\sim \textrm{Dirichlet}_k(\gamma_{t,1},\ldots,\gamma_{t,k})$ independently of $\Delta A_0(t)$ for all $t\geq 1$. From Definition \ref{defin:subbetastacy} it is clear that if $F(t,c)$ is subdistribution beta-Stacy with parameters $\{(\alpha_{t,0},\ldots,\alpha_{t,k}):t\geq 1\}$, then from $(1-\Delta A_0(t),\Delta A_1(t),\ldots, \Delta A_k(t))=(W_{t,0},W_{t,1},\ldots,W_{t,k})$ and Theorem 2.5 of \citet{Ng2011}, then the corresponding cumulative hazards $A(t)$ must be beta-Dirichlet with parameters $\beta_{t,1}=\sum_{d=1}^k \alpha_{t,d}$, $\beta_{t,2}=\alpha_{t,0}$, and $\gamma_{t,d}=\alpha_{t,d}$ for all $d=1,\ldots,k$ and $t\geq 1$. The converse is not true unless $\beta_{t,1}=\sum_{d=1}^k \gamma_{t,d}$ for all $t\geq 1$. 

\section{Nonparametric cumulative incidence regression}\label{sec:regr}

In this section, we will illustrate a subdistribution beta-Stacy regression approach for competing risks. We consider data represented by a sample of possibly-right censored discrete survival times and cause-of-failure indicators $(t_1^*,d_1^*)$, $\ldots$, $(t_n^*,d_n^*)$. Each observation $(t_i^*,d_i^*)$ is associated with a known vector $w_i$ of predictors. We assume that, as described in the Introduction, the time axis has been discretized according to some fixed partition $0=\tau_0<\tau_1<\tau_2<\cdots$ representing the measurement scale of event times. Hence, $(t_i^*,d_i^*)=(t,d)$ for some $d=1,\ldots,k$ if an event of type $d$ has been observed in the time interval $(\tau_{t-1},\tau_t]$. Instead, $(t_i^*,d_i^*)=(t,0)$ if no event has been observed during $(\tau_{t-1},\tau_t]$ and censoring took place in the same interval.

Our starting point is the assumption that individual observations are exchangeable within each level of the predictor variables $w_i$. This leads us to consider a hierarchical modelling approach akin to that adopted by \citet{Lindley1972}, \citet{Antoniak1974}, and \citet{Cifarelli1978}. In this approach, the observations $(t_1^*,d_1^*),\ldots,(t_n^*,d_n^*)$ are assumed to be independent, each generated by a corresponding subdistribution function $F(t,c; w_i)$ under some censoring mechanism (as described in Section \ref{sec:post}). Then a joint prior distribution is assigned to all the $F(t,c; w_i)$ for $i=1,\ldots,n$. In the usual parametric approach these would simply be assigned a specific functional form $F_0(t,c\mid \theta; w_i)$ by letting $F(t,c; w_i)=F_0(t,c\mid \theta; w_i)$ for all $i=1,\ldots,n$, then assigning a prior distribution to the common parameter vector $\theta$. Consequently, in the parametric approach the only source of uncertainty is that on the value of $\theta$ and not on the functional form of $F_0(t,c\mid \theta; w_i)$. Thus, to incorporate this uncertainty in the model and gain more flexibility in representing the shape of the $F(t,c; w_i)$, we instead adopt a nonparametric perspective \citep{Mueller2013, Hjort2010, Phadia2015, Ghosal2017}. Specifically, we consider the following modelling approach: first, a specific functional form $F_0(t,c\mid \theta; w_i)$ is chosen; second, letting $w_{(1)},\ldots,w_{(L)}$ denote the distinct values of $w_1,\ldots,w_n$, the subdistribution functions $F(\cdot;w_{(i)})$ are assumed to be independent and distributed as $F(\cdot; w_{(i)})\sim \textrm{SBS}(\omega(\theta,w_{(i)}),F_{0}(\cdot\mid \theta,w_{(i)}))$ for all $i=1,\ldots,L$, where $\omega(\theta,w_{(i)})=(\omega_t(\theta,w_{(i)}))_{t\geq 1}$; finally, the parameter vector $\theta$ is assigned its own prior distribution. The use of weights $\omega_t(\theta,w_{(i)}))$ dependent on the time location $t$, the covariate values $w_{(i)}$, and the parameter $\theta$ allows the local control of the uncertainty on the functional form of $F_{0}(\cdot\mid \theta,w_{(i)})$ by determining the concentration of the subdistribution beta-Stacy prior around the increments $\Delta F_{0}(t,c\mid \theta,w_{(i)})$, $c=1,\ldots,k$ (c.f. Remark \ref{note:priorspecification}). Additionally, the presence of the parameter vector $\theta$ in the model allows borrowing of information across different covariate levels $w_{(i)}$, as in \citet{Cifarelli1978}, \citet{Muliere1993}, and \citet{Mira1996}.  

Many options are available in the literature for specifying the functional form of the centering parametric subdistribution $F_0(t,c\mid \theta,w_{i})$ when adopting our modelling approach. In general, following \citet{Larson1985}, a useful strategy consists in starting from the decomposition
$$F_0(t,c\mid \theta,w_{i})=F_0^{(1)}(c\mid \theta_1,w_i)F_0^{(2)}(t\mid \theta_2,c,w_i),$$
and then separately modelling the probability $F_0^{(1)}(c\mid \theta_1,w_i)$ of observing a failure of type $c$ and the conditional time-to-event distribution $F_0^{(2)}(t\mid \theta_2,c,w_i)$ given the specific failure type $c$. For example, $F_0^{(1)}(c\mid \theta,w_i)$ can be specified to be any model for multinomial responses, such as the familiar multinomial logistic regression model  
\begin{equation}\label{eqn:multlogreg}
F_0^{(1)}(c\mid \theta_1,w_i)=\frac{\exp(w_i'b_c)}{1+\sum_{d=1}^{k-1} \exp(w_i'b_d)},\quad F_0^{(1)}(k\mid \theta_1,w_i)=\frac{1}{1+\sum_{d=1}^{k-1} \exp(w_i'b_d)},
\end{equation}
where $c=1,\ldots,k-1$ and $\theta_1=(b_1,\ldots,b_{c-1})$ \citep[Chapter 7]{Agresti2003}. The time-to-event distribution $F_0^{(2)}(t\mid \theta_2,c,w_i)$ can instead be specified by discretizing a continuous-time distribution (e.g. Weibull or log-normal) with cumulative distribution function $G_0(\cdot\mid \theta_2,c,w)$ by letting $F_0^{(2)}(t\mid\theta_2,c,w_i) = G_0(\tau_t\mid \theta_2,c,w_i)$. For example, in the Weibull case one could let 
\begin{equation}\label{eqn:weibullreg}
G_0(t\mid \theta_2,c,w_i)=1-\exp(-t^{u_c}\exp(w_i'v_c))
\end{equation}
and $\theta_2=(v_1,\ldots,v_k,u_1,\ldots,u_k)$, yielding a discrete-time version of the common parametric Weibull regression model \citep[Chapter 5]{Aalen2008}. Alternatively, $F_0^{(2)}(t\mid \theta_2,c,w_i)$ may be specified as the Grouped Cox model \citep[Section 2.4.2]{Kalbfleisch2002}, the logistic regression model of \citet{Cox1972}, or other discrete-time models \citep{Schmid2016,Tutz2016,Berger2017}. 

The weights $\omega_t(\theta,w_{(i)})$ can be calibrated in order to account for the degree of uncertainty attached to the chosen parametric functional form of $F_0(\cdot\mid \theta,w_i)$. In fact,   
by Remark \ref{note:priorspecification}, conditionally on $\theta$ as the weights increase the prior $\textrm{SBS}(\omega,F_{0}(\cdot\mid\theta,w_{(i)}))$ becomes more concentrated on $F_{0}(\cdot\mid\theta,w_{(i)}))$, giving more importance to the parametric component of the model. To exploit this fact, we consider weights $\omega_t$ defined as $\omega_t(\theta,w_{(i)})=\omega_{0,t}(\theta,w_{(i)})/m$, where
$$\omega_{0,t}(\theta,w_{(i)})=\frac{\tau_t-\tau_{t-1}}{\sum_{d=1}^k F_0(\tau_t,d\mid \theta,w_{(i)})-\sum_{d=1}^k F_0(\tau_{t-1},d\mid        \theta,w_{(i)})}.$$
Extending the approach of \citet{Rigat2012}, this choice allows the model to rely more on its parametric component over the times where observations are less likely to be available (high $\omega_{0,t}$), whereas it allows for more flexibility over the times where most data is expected (low $\omega_{0,t}$). The parameter $m$ can be further used to control the standard deviations 
\begin{equation*}\label{eqn:stddev}
\sigma_m(t,c;w_{(i)})=\sqrt{\Var{\Delta F(t,c\mid w_{(i)})-\Delta F_0(t,c\mid \theta,w_{(i)})}}
\end{equation*}
(computed from the joint distribution of $F(t,c\mid w_{(i)})$ and $\theta$), which together measure how much the subdistribution function $F(\cdot; w_{(i)})$ can deviate from the parametric model $F_0(\cdot\mid\theta,w_{(i)})$. More precisely, by Remark \ref{note:priorspecification}, for all fixed $t\geq 1$ and $c=1,\ldots,k$, is a decreasing function of $m$. In particular, $\sigma_m(t,c;w_{(i)})\rightarrow 0$ for all $t$ and $c$ as $m\rightarrow 0$, implying that for $m\approx 0$ the model becomes essentially equivalent to the centering parametric model. Conversely, $\sigma_m(t,c;w_{(i)})$ increases to its maximum value 
\begin{equation}\label{eqn:maxstddev}
\sigma_\infty(t,c;w_{(i)})=\sqrt{E[\Delta F_0(t,c\mid \theta,w_{(i)})(1-\Delta F_0(t,c\mid \theta,w_{(i)}))]}
\end{equation}
for all $t$ and $c$ as $m\rightarrow+\infty$. Hence, for large $m$  the model becomes more flexible and is allowed to deviate more freely from the centering parametric model.    

\begin{remark}
Conditionally on $\theta$, the predictive structure of such model can be characterized as by associating an urn system like that described in Section \ref{sec:predconstr} to each distinct value of $w_{(i)}$. The initial composition of these urns is determined by $\alpha_{t,c}(\theta,w_{(i)}) = \omega_{0,t}(\theta,w_{(i)}) \Delta F_0(t,c\mid \theta,w_{(i)})$ and $\alpha_{t,0}(\theta,w_{(i)}) = \omega_{0,t}(\theta,w_{(i)}) \left(1-\sum_{d=1}^k F_0(t,d\mid \theta,w_{(i)})\right)$. If each extracted ball is reinforced by $m$ similar balls, then by Theorem \ref{thm:reprthm} and Remark \ref{remark:m}, the distributions associated to the same value of $w_{(i)}$ are independent and each distributed according to some $F(\cdot\mid w_{(i)})\sim \textrm{SBS}((\omega_t(\theta,w_{(i)}))_{t\geq 1},F_{0}(\cdot\mid \theta,w_{(i)}))$, where $\omega_t(\theta,w_{(i)})=\omega_{0,t}(\theta,w_{(i)})/m$ as above. 
\end{remark}

\subsection{Sampling from the posterior distribution}

To fix notations, let ${t}^*=(t_1^*,\ldots,t_n^*)$, ${d}^*=(d_1^*,\ldots,d_n^*)$, ${w}=(w_1,\ldots,w_n)$, and $\mcal{F}=(F(\cdot; w_{(1)})$, $\ldots$, $F(\cdot;w_{(L)}))$. Also, let $n_j=\sum_{i=1}^n\Ind{w_i=w_{(j)}}$ and for all $i=1,\ldots,n_j$ let $(t^*_{j,1},d^*_{j,1})$, $\ldots$, $(t^*_{j,n_j},d^*_{j,n_j})$ be the set of observations corresponding to the value $w_{(j)}$. Lastly, let $z_{j,i}=\Ind{d_{j,i}\neq 0}$ for all possible $j$ and $i$. Finally, let $t_j^*=(t^*_{j,i}:i=1,\ldots,n_j)$ and $d_j^*=(d^*_{j,i}:i=1,\ldots,n_j)$ for all $j=1,\ldots,L$.

\begin{theorem}
Assuming ignorable right censoring, the marginal likelihood of $\theta$ is 
$$P({t}^*,{d}^*\mid \theta,{w}) = \prod_{j=1}^L\prod_{i=1}^{n_j} \left\{\Delta F^*_{j,i-1}(t_{j,i}^*,d_{j,i}^*\mid\theta,w_{(j)})^{z_{j,i}} \left[1-\sum_{d=1}^k F_{j,i-1}^*(t^*_{j,i},d\mid\theta,w_{(j)})\right]^{1-z_{j,i}}\right\},$$
where, for all $j=1,\ldots,m$: i) for all $i=1,\ldots,n_j$, $F^*_{j,i}(t,d\mid \theta,w_{(j)})$ is the predictive distribution of a new observation from $F(\cdot\mid w_{(j)})$ given $(t^*_{j,1},d^*_{j,1})$, $\ldots$, $(t^*_{j,i},d^*_{j,i})$, obtained from Corollaries \ref{note:preddist} and \ref{note:postsbs}; ii) $F^*_{j,0}(t,d\mid \theta,w_{(j)})=F_0(t,d\mid\theta,w_{(j)})$.
\end{theorem}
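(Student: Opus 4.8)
The plan is to obtain the formula from three ingredients: the a priori independence of the group-specific processes $F(\cdot;w_{(j)})$, a prequential (chain-rule) factorisation of each group's marginal likelihood, and the sequential conjugacy already established in Theorems \ref{thm:conjugacy} and \ref{thm:censoringgeneral}. First I would write the marginal likelihood as the integral $P(t^*,d^*\mid\theta,w)=\int P(t^*,d^*\mid\mcal{F},\theta,w)\,dP(\mcal{F}\mid\theta)$ over $\mcal{F}=(F(\cdot;w_{(1)}),\ldots,F(\cdot;w_{(L)}))$. Since, conditionally on $\theta$, the $F(\cdot;w_{(j)})$ are independent and the observations carrying covariate value $w_{(j)}$ depend only on $F(\cdot;w_{(j)})$ (invoking ignorability of censoring via Theorem \ref{thm:censoringgeneral} to discard the censoring-distribution factors, which do not involve $\theta$), the integral factorises and yields $P(t^*,d^*\mid\theta,w)=\prod_{j=1}^L P(t_j^*,d_j^*\mid\theta,w_{(j)})$. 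This reduces the problem to a single group.

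Within group $j$ I would apply the elementary chain rule to the (exchangeable) sequence $(T^*_{j,1},D^*_{j,1}),\ldots,(T^*_{j,n_j},D^*_{j,n_j})$, writing
\begin{equation*}
P(t_j^*,d_j^*\mid\theta,w_{(j)})=\prod_{i=1}^{n_j} P\big((T^*_{j,i},D^*_{j,i})=(t^*_{j,i},d^*_{j,i})\mid (t^*_{j,1},d^*_{j,1}),\ldots,(t^*_{j,i-1},d^*_{j,i-1}),\theta,w_{(j)}\big).
\end{equation*}
The key step is then to identify each one-step predictive factor. By the conjugacy of Theorem \ref{thm:conjugacy}, extended to random ignorable censoring by Theorem \ref{thm:censoringgeneral}, the conditional law of $F(\cdot;w_{(j)})$ given the first $i-1$ observations of the group is again a subdistribution beta-Stacy process; its predictive distribution is therefore $F^*_{j,i-1}(\cdot\mid\theta,w_{(j)})$, computed from Corollaries \ref{note:preddist} and \ref{note:postsbs}, with the convention $F^*_{j,0}=F_0$.

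It remains to read off the one-step predictive probability of the possibly-censored observation $(t^*_{j,i},d^*_{j,i})$. When $z_{j,i}=1$ (an uncensored failure of type $d^*_{j,i}\in\{1,\ldots,k\}$) this probability is the predictive point mass $\Delta F^*_{j,i-1}(t^*_{j,i},d^*_{j,i}\mid\theta,w_{(j)})$, whereas when $z_{j,i}=0$ (censoring at time $t^*_{j,i}$) ignorability reduces it to the predictive survival probability $1-\sum_{d=1}^k F^*_{j,i-1}(t^*_{j,i},d\mid\theta,w_{(j)})$; these two cases are exactly the per-observation factors of the censored-data likelihood of Remark \ref{note:censlik}, evaluated at the current predictive distribution, and combine into $\Delta F^*_{j,i-1}(t^*_{j,i},d^*_{j,i})^{z_{j,i}}[1-\sum_{d=1}^k F^*_{j,i-1}(t^*_{j,i},d)]^{1-z_{j,i}}$. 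Substituting into the chain-rule product and then taking the product over $j$ gives the stated expression.

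The main obstacle is the justification of the one-step factors in the censored case: one must argue carefully that, after conditioning on the past of the group, the relevant contribution of a censored subject is the survival probability under the current predictive distribution rather than a point mass, and that the censoring distribution $H$ contributes only a multiplicative factor free of $\theta$ that may be suppressed. Both points follow from the ignorability argument used in the proof of Theorem \ref{thm:censoringgeneral} (the factorisation $L^*(F,H)=L(F)L^*(H)$ applied conditionally at each step), so the remaining work is bookkeeping: checking that the sequential posterior updates of Corollary \ref{note:postsbs} compose correctly, so that $F^*_{j,i-1}$ is indeed the predictive distribution after exactly the first $i-1$ observations of group $j$.
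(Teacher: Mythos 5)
Your proposal is correct and follows essentially the same route as the paper's proof: factorisation over covariate groups via the conditional independence of the $F(\cdot;w_{(j)})$ given $\theta$, a chain-rule decomposition within each group, identification of the one-step predictive factors through the conjugacy of Theorem \ref{thm:conjugacy} and Corollaries \ref{note:preddist} and \ref{note:postsbs}, and an appeal to the ignorability factorisation of Theorem \ref{thm:censoringgeneral} to extend from fixed to random censoring. The only cosmetic difference is that you interleave the ignorability argument throughout, whereas the paper first treats fixed censoring and invokes ignorability once at the end.
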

\begin{proof}
First assume that censoring is fixed. In this case, the marginal likelihood of $\theta$ can be obtained from conditional likelihood
\begin{align*}
P(t^*,d^*\mid \theta,w,\mcal{F}) &= \prod_{j=1}^L P(t^*_j,d^*_j\mid\theta,w_{(j)},F(\cdot;w_{(j)})) \\
&= \prod_{j=1}^L P(t^*_j,d^*_j\mid F(\cdot;w_{(j)}))
\end{align*}
by taking its expectation with respect to the distribution of $\mcal{F}$ conditional on $\theta$. Since the $F(\cdot;w_{(j)})$ are independent conditionally on $\theta$, the marginal likelihood is thus
\begin{align*}
P(t^*,d^*\mid \theta,w) &= \prod_{j=1}^L P(t^*_j,d^*_j\mid\theta,w_{(j)}) \\
&= \prod_{j=1}^L \prod_{i=1}^{n_j} P(T^*_{j,i}=t^*_{j,i},D^*_{j,i}=d^*_{j,i}\mid t^*_{j,h}, d^*_{j,h}, h<i; \theta,w_{(j)}), 
\end{align*}
where $P(T^*_{j,i}=t^*_{j,i},D^*_{j,i}=d^*_{j,i}\mid t^*_{j,h}, d^*_{j,h}, h<i; \theta,w_{(j)})$ is the conditional predictive distribution of $(t^*_{j,i},d^*_{j,i})$ given all $(t^*_{j,h}, d^*_{j,h})$ with $h<i$ and $\theta$. If $z^*_{j,i}=1$, this can be derived from Corollaries \ref{note:postsbs} and \ref{note:preddist} and it is equal to $\Delta F^*_{j,i-1}(t^*_{j,i},d^*_{j,i}\mid \theta)$. If instead $z^*_{j,i}=0$, then this is equal to 
\begin{align*}
P(T_{j,i}>t^*_{j,i}\mid (T^*_{j,h}, D^*_{j,h})=(t^*_{j,h}, d^*_{j,h}), h<i; \theta,w_{(j)})=1-\sum_{d=1}^k F^*_{j,i-1}(t^*_{j,i},d\mid\theta).
\end{align*}
This justifies the thesis if censoring is fixed. By similar arguments as those in Section \ref{sec:post}, the same likelihood can be assumed to hold also in presence of ignorable censoring, as needed.
\end{proof}

Using the above result, the joint posterior distribution $P(\mcal{F},\theta\mid t^*,d^*,w)$ of $\mcal{F}$ and $\theta$ can be obtained as 
\begin{equation}\label{eqn:post}
P(\mcal{F},\theta\mid t^*,d^*,w)\propto P(\theta)P({t}^*,{d}^*\mid  \theta,{w})\prod_{j=1}^L P_j(F(\cdot;w_{(j)})\mid \theta,w),
\end{equation}
where $P(\theta)$ represents the prior distribution of $\theta$ (which is independent of $w$) and the term $P_j(F(\cdot;w_{(j)})\mid \theta,w)$ represents the posterior distribution of $F(\cdot; w_{(j)})\sim \textrm{SBS}(\omega,F_{0}(\cdot\mid \theta,w_{(j)}))$ obtained (for fixed $\theta$) from the data $\mcal{D}_j=\{(t_i^*,d_i^*):w_i=w_{(j)}, i=1,\ldots,n\}$ using the update rule described in Theorem \ref{thm:conjugacy}. Now, although the posterior distribution for $\theta$ is not available for exact sampling, Equation (\ref{eqn:post}) suggests the use of a Markov Chain Monte Carlo strategy such as the following to perform approximate posterior inferences. First, a sample $\{\theta_i\}_{i=1}^S$ from the marginal posterior distribution of $\theta$ is obtained, after discarding an appropriate number of burn-in iterations, via a Random Walk Metropolis-Hastings algorithm \citep[Section 7.5]{Robert2004}. A multivariate Gaussian distribution can be considered after the reparametrization induced by a logarithmic transformation of each shape parameter $u_c$ (to account for their positive support).  Second, having obtained a sample $\{\theta_i\}_{i=1}^S$ as just described, the conditional posterior distribution of $F(\cdot;w_{(j)})$ given $\theta_i$ and the data $\mcal{D}_j$ is obtained by direct simulation for all $i=1,\ldots,S$ and $j=1,\ldots,L$. Specifically, the parameters of the conditional posterior distribution
$P_j(F(\cdot;w_{(j)})\mid \theta,w)$ of $F(\cdot;w_{(j)})$ given $\theta_i$ and $\mcal{D}_j$ are obtained using Theorem \ref{thm:conjugacy}. Then a sample $F_i(\cdot;w_{(j)})$ from $P_j(F(\cdot;w_{(j)})\mid \theta,w)$ is obtained using Definition \ref{defin:subbetastacy} by sampling from the relevant Dirichlet distributions. The sample $\{(\theta_i,F_i(\cdot;w_{(1)}),\ldots,F_i(\cdot;w_{(L)}))\}_{i=1}^S$ so obtained then represents a sample from the joint posterior distribution of Equation (\ref{eqn:post}).

\subsection{Estimating the predictive distributions}

Let $T_{n+1}$ and $D_{n+1}$ be the unknown uncensored survival time and type of realized outcome, respectively, for a new individual with covariate profile $w_{n+1}$. The objective is to estimate the predictive distribution of $(T_{n+1},D_{n+1})$ given the data $(t_1^*,d_1^*)$, $\ldots$, $(t_n^*,d_n^*)$. We distinguish two cases: i) $w_{n+1}=w_{(j)}$ for some $j=1,\ldots,L$, and ii) $w_{n+1}\neq w_{(1)},\ldots,w_{(L)}$. In the first case, simply obtain a sample $\{F_i(\cdot;w_{n+1})=F_i(\cdot;w_{(j)})\}_{i=1}^S$ from the posterior distribution of $F(\cdot;w_{(j)})$ using the output of the procedure described above. The predictive distribution of $(T_{n+1},D_{n+1})$ is then estimated as $S^{-1}\sum_{i=1}^S F_i(\cdot;w_{n+1})$. In the second case it is still possible to estimate the predictive distribution of $(T_{n+1},D_{n+1})$ by recycling the sample $\{\theta_i\}_{i=1}^S$. Specifically, for each $\theta_i$, $F_i(\cdot;w_{n+1})$ is simulated directly from the $\textrm{SBS}(\omega,F_0(\cdot\mid \theta_i,w_{n+1}))$ distribution. The predictive distribution of $(T_{n+1},D_{n+1})$ is then estimated as the average of the sampled subdistribution functions, as before.

\section{Application: analysis of the melanoma dataset}\label{sec:application}

\subsection{Data description and analysis objectives}
\label{sec:data}

To illustrate our modelling approach, we analyse data collected by \citet{Drzewiecki1980} on 205 stage I melanoma patients who underwent surgical excision of the tumor during 1962-1977 at the Odense University Hospital, Denmark. This dataset (which includes only data for those 205 patients for which an histological examination was carried out, out of the 225 originally participating in the study) has been previously used to illustrate several survival analysis methods \citep[Example I.3.1]{Andersen2012} and is freely available online as part of the \emph{timereg} R library \citep{Scheike2011}. Each considered patient was followed from the date of surgery to the time of death for melanoma (event of type 1), death due to other causes (event of type 2), or censoring (e.g. study drop-out or end of the study, defined at the end of 1977). Event times are only known discretized at the day level, so that $\tau_t=t$ for all $t\geq 0$ can be assumed. (The time-interval $(\tau_{t-1},\tau_t]$ represents the $t$-th day of follow-up). In summary, 126 ($61\%$) of the study participants were women and 79 ($39\%$) were men. Overall, a total of 57 ($28\%$) patients died due to melanoma during follow-up, while 14 ($7\%$) died due to other causes, overall accumulating 441,324 person-days of follow-up (maximum follow-up: men, 4,492 days; women, 5,565 days). Using these data, we implement a competing-risks regression model to assess the long-term prognosis of melanoma patients following surgical excision of the tumor with respect to the risk of death due to melanoma. In doing so, we account for death due to other causes as a competing event and consider gender as a potential predictor. The R code used to perform the analyses is available as on-line Supplementary Material and at \url{https://github.com/andreaarfe/subdistribution-beta-stacy}.

\subsection{Model specification and prior distributions}\label{sec:applimodel}

We consider a regression model specified as explained in Section \ref{sec:regr}. For illustration, we specify the centering parametric model $F_0(t,c\mid\theta,w_{i})$ by consider the multinomial logistic model (\ref{eqn:multlogreg}) for $F_0^{(1)}(c\mid \theta_1,w_i)$ and the discrete Weibull regression model (\ref{eqn:weibullreg}) for $F_0^{(2)}(t\mid \theta_2,c,w_i)$, as these correspond to models widely used in applications. In these models, for all subjects $w_i=(w_{i,1},w_{i,2})$ includes an intercept term ($w_{i,1}=1$) and the indicator variable for gender ($w_{i,2}=0$ for women, $w_{i,2}=1$ for men). Consequently, in the notations of Section \ref{sec:regr}, $\theta_1=(b_1)$, where $b_1=(b_{1,1},b_{1,2})$ is the vector of the two regression coefficients in model (\ref{eqn:multlogreg}) ($b_{1,1}$ for the intercept, $b_{1,2}$ for the gender indicator). Additionally, $\theta_2=(v_1,v_2,u_1,u_2)$, where $v_c=(v_{c,1},v_{c,2})$ is the vector of the two regression coefficients for the cause-specific Weibull regression model (\ref{eqn:weibullreg}) for $c=1,2$ ($v_{c,1}$ for the intercept, $v_{c,2}$ for the gender indicator), while $u_1,u_2>0$ are the two corresponding shape parameters. We assign independent prior distributions to all parameter as follows. Noting that \citet{Drzewiecki1980} estimated that the overall 10-years survival probability was about 50\% (estimated via the Kaplan-Meier method) in a previous analysis of a larger dataset, we calibrate the priors for $v_1$ and $v_2$ in such a way so as to center the curves $F_0^{(2)}(t\mid \theta_2,c,w_i)$ around a model with a median survival of 3,650 days. To do so, we assigned $N(\log(-\log(0.50)/3,650),1)$ priors to $v_{1,1}$ and $v_{2,1}$, and $N(0,1)$ to $v_{1,2}$ and $v_{2,2}$. We assign a $N(0,1)$ prior distributions to $b_{1,1}$, $b_{1,2}$ and a gamma distribution $Gamma(g_1,g_2)$ with shape parameter $g_1=11$ and rate parameter $g_2=10$ distribution to $u_1$ and $u_2$ (thus centering the corresponding Weibull distributions on an exponential model). Numerical simulations reported in the on-line Supplementary Material (Appendix A, Figure A.1) suggest that these choices yield a fairly diffuse prior distribution for the subdistribution function of the model. As a sensitivity analysis, in the online supplementary Material (Appendix C), we report the results obtained from a similar model but considering a discrete log-normal distribution for the centering parametric subdistribution function.

\subsection{Calibrating the prior concentrations}\label{sec:priorcalib}

To illustrate the behaviour of our model as $m$ varies, Figure \ref{fig:prior_stddev} shows, for the prior distributions specified in the previous section, the values of the prior standard deviations $\sigma_m(t,c;w_{(i)})$ for increasing values of $m$, computed by simulating from the priors described in the previous setting and focusing on the subdistribution of death due to melanoma ($c=1$) among women ($w_{(i)}=(1,0)$). Qualitatively identical results (data not shown) can be obtained for death due to other causes ($c=2$) or men ($w_{(i)}=(1,1)$). These results show how, for small $m$ (e.g. $m=1$ in Figure \ref{fig:prior_stddev}), the nonparametric prior $\textrm{SBS}(\omega(\theta,w_{(i)}),F_{0}(\cdot\mid \theta,w_{(i)}))$ for $F(t,c;w_{(i)})$ is practically fully concentrated on its parametric component (as $\sigma_m(t,c;w_{(i)})\approx 0$ for most $t\geq 1$). This implies that for small $m$ the subdistribution $F(t,c;w_{(i)})$ will tend to be almost equal to the parametric centering subdistribution $F_0(t,c\mid \theta; w_{(i)})$ a priori. However, as $m$ increases, so does $\sigma_m(t,c;w_{(i)})$, representing increasing levels of prior uncertainty on the functional form of $F(t,c;w_{(i)})$. For sufficiently large values of $m$ (e.g. $m=10^5$ in Figure \ref{fig:prior_stddev}), the $\sigma_m(t,c;w_{(i)})$ achieve values close to their upper bound $\sigma_\infty(t,c;w_{(i)})$, which represents a situation of maximum uncertainty on the functional form of $F(t,c;w_{(i)})$. Regardless of the value of $m$, the $\sigma_m(t,c;w_{(i)})$ decrease as $t\geq 1$ increases, showing how the parametric model $F_0(t,c\mid \theta; w_{(i)})$ is given more and more weight in determining the form of $F(t,c;w_{(i)})$ over the later portions of follow-up (i.e. over times where less data is expected a priori). These observations both illustrate the consideration of Section \ref{sec:regr} but also suggest that plots like Figure \ref{fig:prior_stddev} may be useful in practice to calibrate the prior concentration.

\begin{figure}
\centering
\includegraphics[scale=0.7]{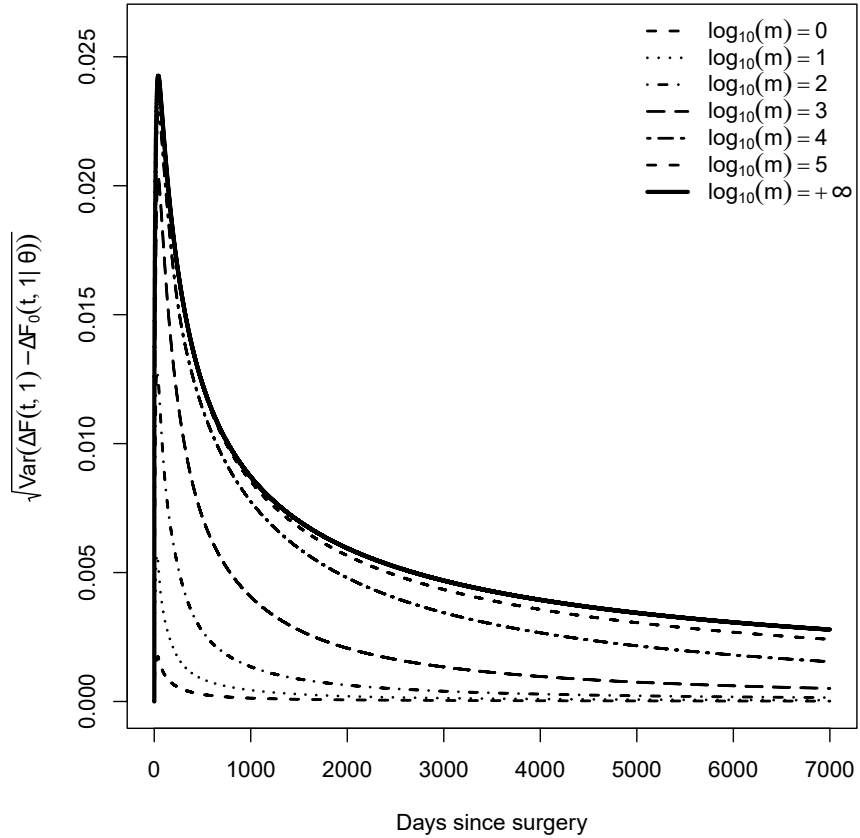}
\caption{Prior standard deviations $\sigma_m(t,c;w_{(i)})=\sqrt{\Var{\Delta F(t,c\mid w_{(i)})-\Delta F_0(t,c\mid \theta,w_{(i)})}}$ corresponding to the model of Section \ref{sec:applimodel} for $m=10^0,10^1,\ldots,10^5$, together with its upper bound $\sigma_\infty(t,c;w_{(i)})$ from Equation \ref{eqn:maxstddev}. Results are for the subdistribution of death due to melanoma ($c=1$) among women ($w_{(i)}=(1,0)$). The quantity $\sigma_m(t,c;w_{(i)})$ measures the concentration of the subdistribution beta-Stacy prior for the subdistribution function $F(t,c;w_{(i)})$ around the parametric subdistribution $F_0(t,c\mid  \theta,w_{(i)})$. Values of $\sigma_m(t,c;w_{(i)})\approx 0$ signify that $F(t,c;w_{(i)})\approx F_0(t,c\mid \theta,w_{(i)})$ with high priori probability, while increasing values of $\sigma_m(t,c;w_{(i)})>0$ signify that functional forms different than $F_0(t,c\mid \theta,w_{(i)})$ are more likely a priori.}
\label{fig:prior_stddev}
\end{figure}

\subsection{Posterior analysis}

\begin{figure}
\captionsetup[subfigure]{justification=centering}

\begin{subfigure}{\columnwidth}
  \centering
  \includegraphics[scale=.8]{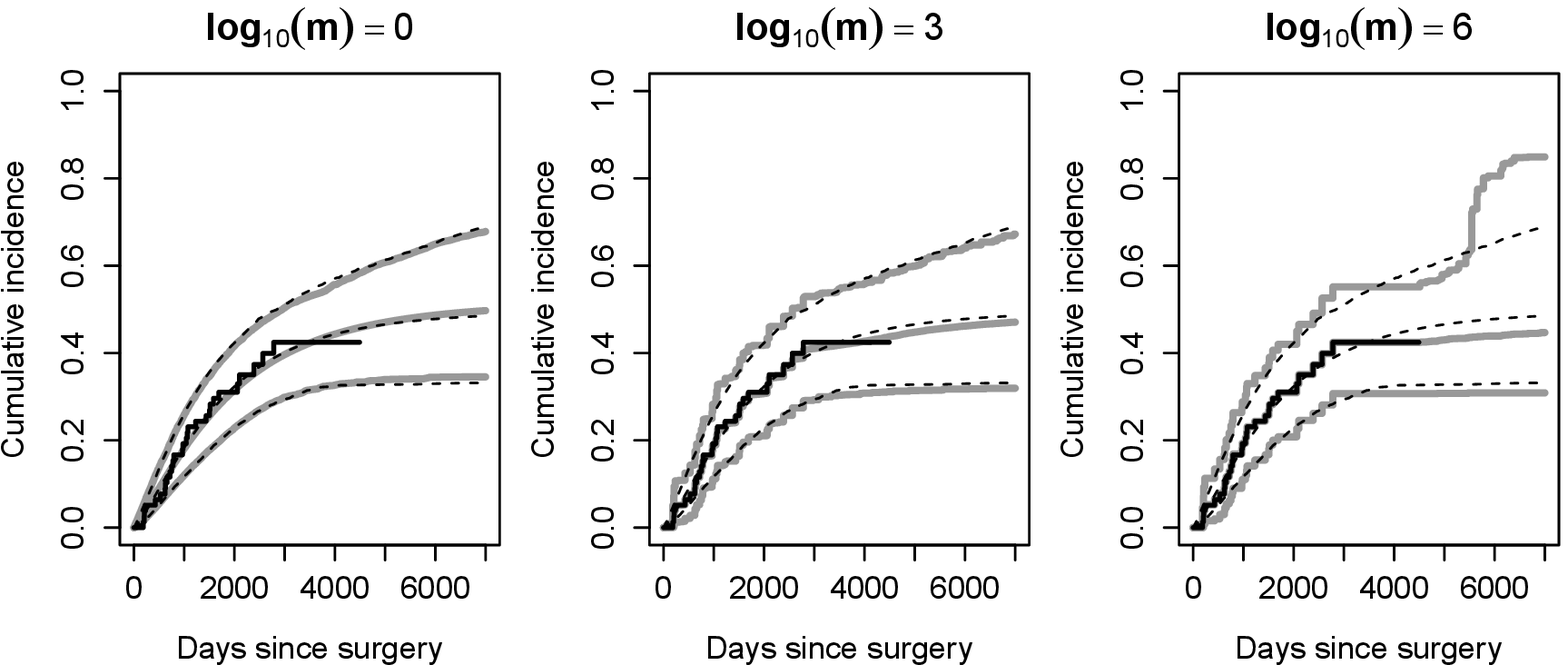}
  \caption{Cumulative incidence of death due to melanoma, men.}
  \label{fig:sfig1}
\end{subfigure}

\medskip

\begin{subfigure}{\columnwidth}
  \centering
  \includegraphics[scale=.8]{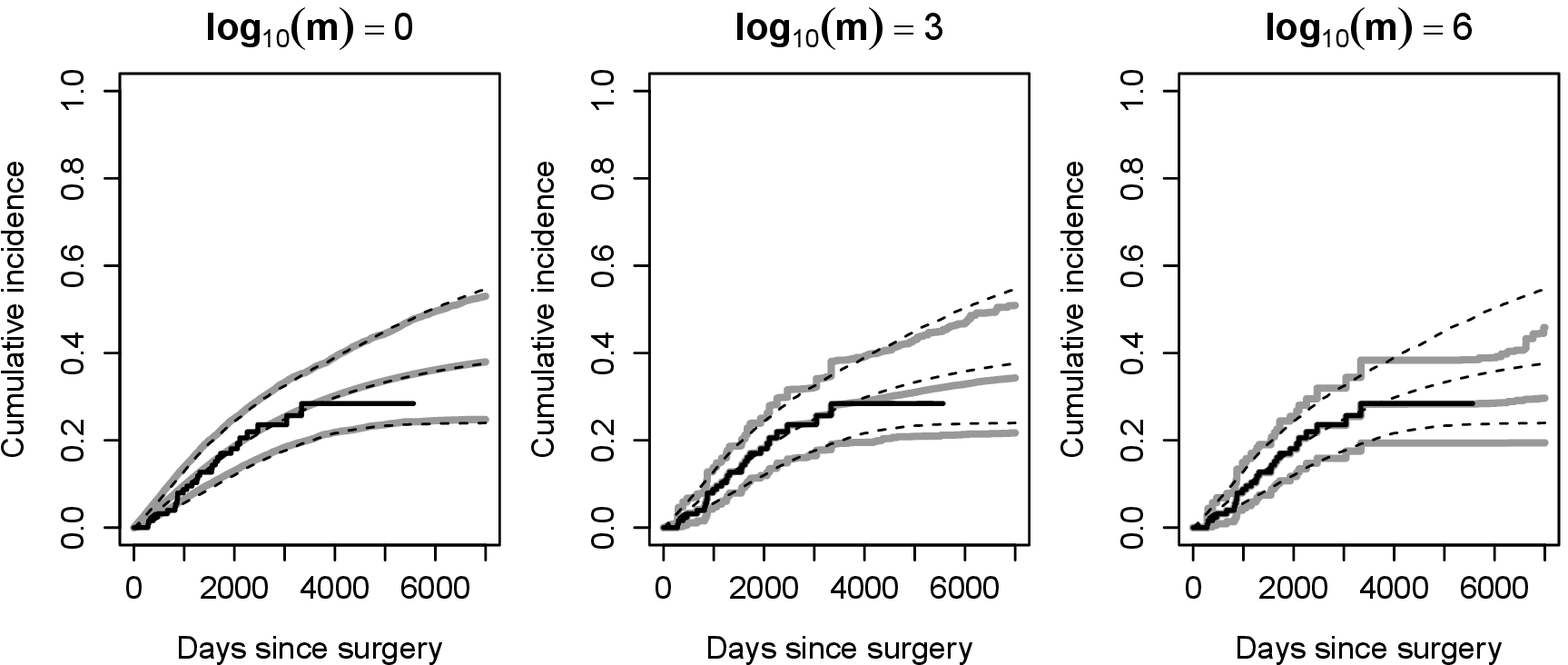}
  \caption{Cumulative incidence of death due to melanoma, women.}
  \label{fig:sfig2}
\end{subfigure}
\caption{Posterior summaries for the cumulative incidence of death due to melanoma among (a) men and (b) women, i.e. posterior summaries for the subdistribution functions $F(t,1;w_{(i)})$ for the model of Section \ref{sec:application} with (a) $w_{(i)}=(1,1)$ and (b) $w_{(i)}=(1,0)$, computed for reinforcement parameters $m=10^0$, $10^3$, and $10^6$. Solid black lines: Kalbfleish-Prentice classical estimators. Solid gray lines: posterior means of the subdistribution function, i.e. posterior predictive distributions, with upper and lower 95\% pointwise credibility limits. Dashed black lines, posterior means and 95\% pointwise credibility limits for the multinomial-Weibull model of Section \ref{sec:regr}. }
\label{fig:preddist}
\end{figure}

Posterior inference was performed by a Random Walk Metropolis-Hastings algorithm with a multivariate Gaussian proposal distribution as suggested in Section \ref{sec:regr}, by means of the \textit{MCMCpack} R package \citep{Martin2011}. The proposal distribution was centered at the current sampled value, with a proposal covariance matrix equal to the negative inverse Hessian matrix of the log-posterior distribution, evaluated at the posterior mode and scaled by $(2.4)^2/d$, where $d$ is the dimension of $\theta$, as suggested by \citet[Section 12.2]{Gelman2013}. To improve mixing, all predictors were standardized before running the algorithm. The parameter vector was initialized with the corresponding value obtained by numerically maximizing the log-posterior distribution. In all cases, the Metropolis-Hastings algorithm was run for a total of 26000 iterations: the first 1000 were discarded as burn-in, while the remaining 25000 were thinned by retaining only one generated sample every 25 iterations. The trace plots of generated Markov Chain Monte Carlo chains did not raise any issue of non-convergence according to both Geweke's test \citep{Geweke1992} and visual inspection (data not shown). Additionally, the obtained posterior distributions were found to be much more concentrated than the considered prior distributions, as shown in the on-line Supplementary Material (Appendix A, Figure A.1). 

Figure \ref{fig:preddist} shows the posterior predictive distributions, i.e. the posterior expectations of the subdistribution functions $F(t,1;w_{(i)})$, for death due to melanoma among men (panel a) and women (panel b), for $m=10^0$, $10^3$, or $10^6$. For comparison, Figure \ref{fig:preddist} also reports i) the estimates obtained from the classical Kalbfleish-Prentice estimator and ii) the posterior estimates obtained from the centering multinomial-Weibull parametric model $F_0(t,c\mid \theta,w_{i})$ of Section \ref{sec:applimodel} (using the same parametric prior distributions for comparability). In general, the results are compatible with the observation that men are subject to a higher risk of death due to melanoma than women \citep{Thoern1994}. Additionally, from Figure \ref{fig:preddist} it is apparent how the classical estimators have a limited usefulness for evaluating long-term prognosis, as these are undefined beyond the range of the observed data. On the other hand, by relying more on its parametric component, our subdistribution beta-Stacy model can provide an extrapolated risk estimate. Risk extrapolations could also be obtained from the centering parametric model, but these would require absolute confidence in its assumed functional form. Oppositely, our model may deviate more or less flexibly from the centering parametric model according to the chosen value of $m$. In fact, the results obtained from the subdistribution beta-Stacy model for $m=1$ are essentially equivalent to those obtained from centering parametric model. However, as $m$ increases the subdistribution beta-Stacy predictive distributions better approximate the classical estimators of the subdistribution function. For $m=10^6$ it can also be seen that the posterior variance of the subdistribution function may be very large beyond the range of the observed data, as seen in Figure \ref{fig:preddist} for men (panel a), i.e. the group that required the most extrapolation for computing the predictive distribution over the considered time period. This behaviour is consistent with the observations of Section \ref{sec:priorcalib}: for large $m$ the model allows more uncertainty on the functional form of the centering model. 

Additional results are provided in the on-line Supplementary material, Appendices B and C. Specifically, in Appendix B we report the results of graphical posterior predictive checks for the goodness of fit of our model, in the style of \citet[Section 6.3]{Gelman2013}. These checks do not raise any concern regarding the fit of our model. In Appendix C, we report the results obtained in the sensitivity analysis based on the discrete log-normal model. The corresponding results are essentially equivalent to the ones obtained here.

\subsection{Simulation study}
\label{sec:sim}

To further explore how much our model can adapt to deviations from the corresponding centering parametric functional form, we conducted a simulation study as follows. First, on the basis of the melanoma data of Section \ref{sec:data}, we computed the maximum likelihood estimates $\widehat{\theta}=(\widehat{b}_{1,1},\widehat{v}_{1,1},\widehat{v}_{2,1},\widehat{u}_1,\widehat{u}_2)$ for the multinomial-Weibull model $F_0(t,c\mid \theta)$ of Section \ref{sec:applimodel}, ignoring covariates for simplicity by including only an intercept term as predictor ($w_{(i)}\equiv 1$). We thus obtained $\widehat{b}_{1,1}=-0.640$, $\widehat{v}_{1,1}=-11.927$, $\widehat{v}_{2,1}=-7.244$, $\widehat{u}_1=1.597$, and $\widehat{u}_2=0.639$. Second, we generated 100 datasets of sample size $n=100$, $n=500$, and $n=1000$ by simulating event times and event types from the subdistribution function $F_0(t,c\mid \widehat{\theta})$, with a fixed censoring time at 7000 days since surgery. Third, in each simulated dataset, we implemented two Bayesian models: i) a multinomial-Weibull parametric model akin to that in Section \ref{sec:applimodel} but where all Weibull shape parameters where fixed as $u_1,u_2\equiv 1$ (all other prior distributions taken as in Section \ref{sec:applimodel}); this corresponds to incorrectly modelling the event times as exponentially distributed given the type of occurring event, incompatibly with the data-generating mechanism; ii) a nonparametric subdistribution beta-Stacy model centered on the parametric model of point (i) for all values of $m=10^0,10^3,10^6$. Fourth and last, for each replicated dataset we computed the Kolmogorov-Smirnov distance $\max_{t\in[0,7000]}\left|\widehat{F}(t,c)-F_0(t,c\mid \widehat{\theta})\right|$ between the fixed data-generating subdistribution function $F_0(t,c\mid \widehat{\theta})$ and its posterior estimate $\widehat{F}(t,c)$, obtained from model (i) or (ii). 

\begin{figure}
\centering
\includegraphics[scale=0.65]{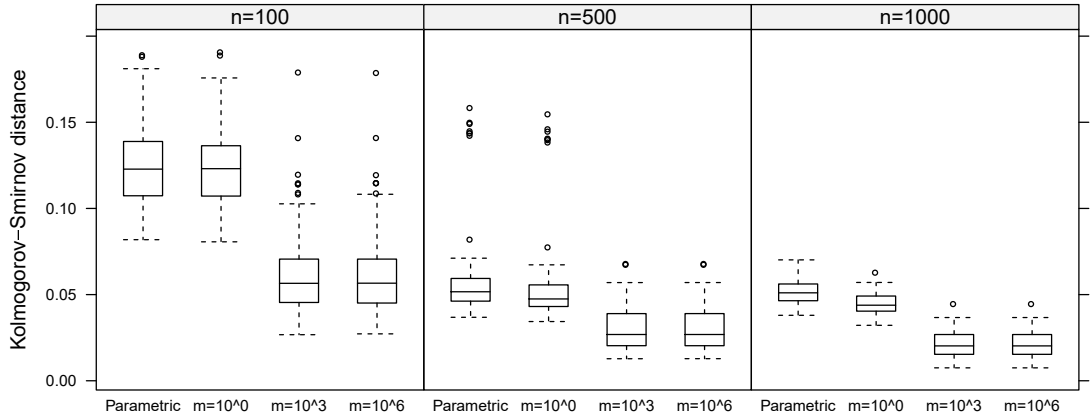}
\caption{Box-plots reporting the distributions of Kolmogorov-Smirnov distances between the data-generating subdistribution function and its posterior estimates generated in the simulation study of Section \ref{sec:sim} for sample sizes $n=100$, $n=500$, or $n=1000$ (considering 100 simulated datasets per sample size). Kolmogorov-Smirnov distances are shown separately for the parametric model described in Section \ref{sec:sim} and the nonparametric subdistribution beta-Stacy model, centered on the same parametric model, for reinforcement parameters of $m=10^0, 10^3, 10^6$.}
\label{fig:simstudy}
\end{figure}

Figure \ref{fig:simstudy} reports the distribution of Kolmogorov-Smirnov distances obtained in the described simulation study. As expected, for all sample size the misspecified parametric model produces the highest median Kolmogorov-Smirnov distances between the posterior estimates of the subdistribution function and the true data-generating subdistribution function. In agreement with previous observations, for $m=1$ the nonparametric subdistribution beta-Stacy model tends to agree with its parametric component, producing similar results as those obtained from the parametric model. For increasing $m$, however, the subdistribution beta-Stacy model attains a greater flexibility to deviate from its misspecified parametric centering model and adapt to the data, thus producing lower Kolmogorov-Smirnov distances. This phenomenon, which is consistent with the observations of Section \ref{sec:priorcalib}, is evident for all considered sample sizes, but especially for $n=1000$. For this sample size, even the subdistribution beta-Stacy model with $m=1$ is associated with a lower median Kolmogorov-Smirnov distance from the data-generating model than its centering parametric model: despite a large prior weight was assigned to the centering mispecified model, the sample size was large enough for the subdistribution beta-Stacy model to be more driven by its nonparametric component and adapt to the data.

\section{Concluding remarks}\label{sec:concl}

In this paper we introduced a novel stochastic process, the subdistribution beta-Stacy process, useful for the Bayesian nonparametric regression analysis of competing risks data. We showed how the subdistribution beta-Stacy process is completely characterized from a specific predictive structure, which we described in terms of the urn-based reinforced stochastic process of \citet{Muliere2000}. The practical value of similar reinforced stochastic processes is that they potentially allow to undertake Bayesian predictive inference without explicit knowledge of the prior. That is, as noted by \citet{Muliere2003}, they allow to update the predictive distributions from past information from a sequence of exchangeable observable without necessarily being able to compute the underlying de Finetti measure of the observations, i.e. the prior. In this paper, we were actually able to characterize the prior, which we identified as the subdistribution beta-Stacy. Although this process may have been defined solely in terms of a sequence of independent Dirichlet random vectors (as in our Definition 2.2), the corresponding predictive construction still greatly simplifies the understanding of its properties.

In this paper we also proposed a Bayesian nonparametric approach for competing risks regression based on the subdistribution beta-Stacy process. This provides several advantages with respect to other available techniques when making predictions in presence of competing risks. For instance, classical nonparametric estimators are typically undefined beyond the last observation time if this is censored, thus limiting their usefulness when making predictions. Parametric models circumvent this issue by assuming a specific functional form for the subdistribution function, thus providing risk extrapolations at the cost of more rigidity when adapting to the data. Conversely, by balancing both a nonparametric and a parametric component, our approach allows risk extrapolations beyond the range of the observed data without losing flexibility in adapting to available data. Additionally, contrary to most approaches available in the literature, our does not require the proportional hazards assumption, thus increasing its flexibility in capturing complex patterns of subdistribution functions when making predictions.  

To conclude, we remark that more general reinforced urn processes may lead to interesting novel approaches for performing Bayesian inference in presence of competing risks or more general settings. In fact, we are currently investigating the following possible generalizations. First, akin as in \citet{Muliere2006}, each extracted ball may be reinforced by a positive random number of new similar balls. This may depend on both the color of the ball extracted from the urn and the state represented by the urn itself. Such construction could be useful to represent allow uncertainty on the strength of belief to be granted to the initial composition of the urns. Second, a continuous-time version of the subdistribution beta-Stacy process could be obtained by embedding a discrete-time reinforced urn process like that of Section \ref{sec:predconstr} into a reinforced continuous-time arrival process (representing the predictive distribution of the event times) as in the approach of \citet{Muliere2003}. Third, the reinforced urn process considered in Section \ref{sec:predconstr} could be generalized to characterize a process prior on the space of transition kernels of a Markovian multistate process, with application to the analysis of event-history data \citep{Aalen2008}. The conceptual difficulty here is that such processes may not be recurrent, complicating the use of representation theorems like those of \citet{Diaconis1980}.

\subsection*{Acknowledgements}
The authors wish to thank the Associate Editor and two anonymous Referees for their useful comments.

\subsection*{Supporting information} Supplementary information is available online at the publisher's web-site. R code to reproduce the analyses is also available at \url{https://github.com/andreaarfe/subdistribution-beta-stacy}.

\bibliography{bibliography}
\bibliographystyle{rss}

\end{document}